\def\?[#1]{\textbf{[#1]}\marginpar{\Large{\textbf{??}}}}
\newtheorem{prop}{Proposition}
\newtheorem{thm}[prop]{Theorem}
\newtheorem{lem}[prop]{Lemma}
\newtheorem{cor}[prop]{Corollary}
\numberwithin{equation}{section}
\numberwithin{prop}{section}
\renewcommand{\Re}{\mathop{\rm Re}\nolimits}
\renewcommand{\Im}{\mathop{\rm Im}\nolimits}
\DeclareMathOperator{\comp}{comp}
\DeclareMathOperator{\Op}{Op}
\DeclareMathOperator{\supp}{supp}
\DeclareMathOperator{\Sp}{Sp}
\DeclareMathOperator{\Span}{Span}
\DeclareMathOperator{\WF}{WF}
\begin{document}
\title{Damped wave equations on compact hyperbolic surfaces}
\author{Long Jin}
\email{long249@purdue.edu}
\address{Department of Mathematics, Purdue University,
150 N. University St, West Lafayette, IN 47907}

\begin{abstract} 
We prove exponential decay of energy for solutions of the damped wave equation on compact hyperbolic surfaces with regular initial data as long as the damping is nontrivial. The proof is based on a similar strategy as in \cite{measupp} and in particular, uses the fractal uncertainty principle proved in \cite{fullgap}.
\end{abstract}

\maketitle

\section{Introduction}
\label{s:intro}

In this paper, we always let $M$ be a compact (connected) hyperbolic surface (with constant negative curvature $-1$) and $\Delta$ be the Laplace-Beltrami operator on $M$. We investigate the long time behavior of the damped wave equation on $M$ with damping function $a\in C^\infty(M)$ such that $a\geq0$ but $a\not\equiv0$:
\begin{equation}
	\label{e:dampwave}
(\partial_t^2-\Delta+2a(x)\partial_t)v(t,x)=0,
\quad v|_{t=0}=v_0(x), \partial_tv|_{t=0}=v_1(x).
\end{equation}
For initial data $(v_0,v_1)\in \mathcal{H}:=H^1(M)\times L^2(M)$, we consider the energy of the solution
\begin{equation}
	\label{e:energy}
E(v(t)):=\frac{1}{2}\int_M|\partial_tv(t,x)|^2+|\nabla v(t,x)|^2dx.
\end{equation}

Our main theorem is the exponential decay of the energy for solutions to \eqref{e:dampwave} with regular initial conditions.
\begin{thm}
	\label{t:energy-decay}
For every $s>0$, there exist constants $C$ and $\gamma=\gamma(s)>0$ such that for any $(v_0,v_1)\in \mathcal{H}^s=H^{s+1}(M)\times H^s(M)$, we have exponential decay of the energy:
\begin{equation}
	\label{e:energy-decay}
E(v(t))\leq Ce^{-\gamma t}\|(v_0,v_1)\|_{\mathcal{H}^s}^2.
\end{equation}
\end{thm}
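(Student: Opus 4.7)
The plan is to reduce Theorem~\ref{t:energy-decay} to a high-frequency resolvent estimate for the stationary operator $P(\lambda) := -\Delta - \lambda^2 - 2i\lambda a$ on a strip below the real axis, and then to establish that estimate via the observability strategy of \cite{measupp}. Recasting \eqref{e:dampwave} as $\partial_t U = \mathcal{A} U$ with $U = (v, \partial_t v)$ on $\mathcal{H}$, a standard semigroup argument in the spirit of Lebeau's work on stabilization shows that \eqref{e:energy-decay} holds for some $\gamma = \gamma(s) > 0$ and data in $\mathcal{H}^s$ as soon as one has a polynomial resolvent bound
\[
\|P(\lambda)^{-1}\|_{L^2 \to L^2} \leq C \langle \lambda \rangle^{N}
\qquad \text{on } \{|\Im \lambda| \leq \gamma_0\},
\]
for some $\gamma_0 > 0$. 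The low-frequency part of the bound is handled by unique continuation together with the hypothesis $a \not\equiv 0$, so only the high-frequency regime $|\Re \lambda| \gg 1$ remains.

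To treat that range I would semiclassically rescale by setting $h = (\Re \lambda)^{-1}$ and writing $\lambda = h^{-1}(1 - ih\nu)$ with $\nu$ bounded. Then $h^2 P(\lambda) = -h^2 \Delta - 1 - 2ih(a - \nu) + O(h^2)$, and the resolvent bound becomes the semiclassical control inequality
\[
\|u\|_{L^2} \leq C h^{-N'}\,\bigl\|\bigl(-h^2\Delta - 1 - 2ih(a-\nu)\bigr)u\bigr\|_{L^2},
\]
uniformly in small $h$ and $\nu \in [-\gamma_0, \gamma_0]$. Arguing by contradiction, suppose there exist $h_j \to 0$, $\nu_j \to \nu_\infty$, and unit-norm $u_j$ whose right-hand side $f_j$ has norm $o(h_j)$. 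Taking imaginary parts of the pairing with $u_j$ yields $\langle a u_j, u_j\rangle \to \nu_\infty$, so the task reduces to proving the observability lower bound $\liminf_j \langle a u_j, u_j\rangle \geq c_0 > 0$ with $c_0$ depending only on $\supp a$; fixing $\gamma_0 < c_0$ then contradicts this identity.

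This observability is precisely the content of \cite{measupp}: passing to a semiclassical defect measure $\mu$ on $S^*M$ along a subsequence, one checks that $\mu$ is a probability measure invariant under the geodesic flow, and the main theorem of \cite{measupp}, built on the fractal uncertainty principle of \cite{fullgap}, asserts that on any compact hyperbolic surface every such $\mu$ has full support. Since $\{a > 0\} \subset M$ is a nonempty open set, its lift to $S^*M$ has positive $\mu$-mass with a quantitative lower bound extractable from the proof, which yields~$c_0$.

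The main obstacle, and the technical heart of the argument, is that \cite{measupp} is stated for exact quasimodes of the self-adjoint Schr\"odinger operator $-h^2\Delta - 1$, while here the operator carries the skew-adjoint zero-order perturbation $-2ih(a-\nu)$. Although this perturbation is $O(h)$ in operator norm, in the long-time Egorov/propagation estimates underlying the fractal uncertainty principle it contributes amplification factors like $\exp(Ct\|a\|_\infty)$, which is only tolerable on the Ehrenfest scale $T = \rho|\log h|$ with $\rho$ small---precisely the regime of \cite{measupp}. The work to be done is to redo the propagation and porosity arguments of that paper with the perturbed operator, to verify that the extracted $\mu$ remains flow-invariant (the commutator of the zero-order perturbation with any pseudodifferential test operator is $O(h^2)$, hence negligible in the semiclassical limit), and to obtain all estimates uniformly in $\nu$ on a small strip.
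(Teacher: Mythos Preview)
Your reduction to a high-frequency resolvent bound on a strip, and your identification of the non-self-adjoint perturbation as the main obstacle, are both on target. But the defect-measure route you outline has a real gap, and the paper's approach is genuinely different.

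The gap is in the extraction of $c_0$. If $c_0$ is meant to come from the single limiting measure $\mu$, then full support only gives $\int \pi^*a\, d\mu > 0$ for \emph{that} $\mu$; there is no a priori uniform lower bound over all admissible sequences, so you cannot fix $\gamma_0 < c_0$ before running the contradiction. If instead $c_0$ is meant to be the universal constant ``extractable from the proof'' of \cite{measupp}, then you are invoking the quantitative observability estimate
\[
\|u\|_{L^2} \leq C\|au\|_{L^2} + \frac{C\log(1/h)}{h}\,\|(-h^2\Delta-1)u\|_{L^2},
\]
and the paper computes explicitly (\S\ref{e:remark}) what this buys for the damped problem: since your $u_j$ are only $O(h)$-quasimodes of $-h^2\Delta - 1$, the second term contributes $O(\log(1/h))$, the resolvent is controlled only on a strip of width $\sim (\log|\tau|)^{-2}$, and the resulting energy decay is merely subexponential, $E(v(t)) \leq Ce^{-t^{1/3}/C}\|(v_0,v_1)\|_{\mathcal{H}^s}$. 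Neither reading of $c_0$ closes the argument.

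What the paper does instead is bypass defect measures and prove a direct decay bound for the \emph{damped} semigroup $U_q(t) = \exp(-it(P - ihQ)/h)$ localized near $S^*M$:
\[
\|U_q(T_0)\Pi\|_{L^2\to L^2} \leq Ch^{\beta_0}, \qquad T_0 = 2\log(1/h)
\]
(Theorem~\ref{t:propagator-decay}). From this the resolvent on a \emph{fixed} strip follows constructively by taking $\frac{i}{h}\int_0^{T_0} e^{itz/h}U_q(t)\Pi\, dt$ as an approximate inverse (Theorem~\ref{t:resolvent}). The propagator estimate is proved by rerunning the \cite{measupp} word-decomposition with $U_q(1)$ in place of $U_0(1)$: the refined symbols now carry factors $\widetilde{a}_j = a_j\exp\bigl(-\int_0^1 q\circ\varphi_s\, ds\bigr)$, and the microlocal partition is chosen so that one letter $a_1$ is supported where this damping integral exceeds some $\eta > 0$. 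Words with density $\geq \alpha$ of the letter $1$ then decay like $h^{\alpha\rho\eta/4}$ from the damping alone; the remaining words are sparse enough in $1$'s that their cardinality (Lemma~\ref{l:X-count}) is beaten by the per-word fractal-uncertainty bound $h^{\beta_{\mathrm{FUP}}}$. This damped/undamped splitting at the level of \emph{words and propagators}, rather than measures, is the idea your outline does not name; ``redo the propagation and porosity arguments with the perturbed operator'' points in the right direction but substantially understates what is required.
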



\subsection{Eigenvalue problem}
\label{s:eigen}
The decay of the energy is closely related to the spectrum of the operator
\begin{equation}
	\label{e:B-matrix}
\mathcal{B}=\begin{pmatrix}
0 & I\\
-\Delta & -2ia
\end{pmatrix}:\mathcal{D}(\mathcal{B})=H^2\times H^1\to H^1\times L^2.
\end{equation}
The strongly continuous semigroup $e^{-it\mathcal{B}}$, $t\geq0$ maps $(v_0,iv_1)\in\mathcal{H}$ to $(v(t),i\partial_tv(t))$ where $v$ is the solution of the damped wave equation \eqref{e:dampwave}.


It is well known that (see e.g. \cite{lebeau}) the spectrum of $\mathcal{B}$ consists of a discrete sequence of eigenvalues with each eigenspace finite dimensional. For $\tau\in\Sp(\mathcal{B})$, there is $u\in H^1$ such that $v(t,x)=e^{-it\tau}u(x)$ satisfies the equation \eqref{e:dampwave} and thus $u$ is an eigenfunction of the (nonlinear) eigenvalue problem
\begin{equation}
	\label{e:eigen}
P(\tau)u:=(-\Delta-\tau^2-2ia\tau)u=0.
\end{equation}

The spectrum $\Sp(\mathcal{B})$ is symmetric with respect to the imaginary axis and is contained in $\{-2\|a\|_\infty\leq\Im\tau\leq0\}$. Moreover the only real eigenvalue is $\tau=0$ which is simple with eigenfunctions being constant functions. If $\Re\tau\neq0$, we have $-\|a\|_\infty\leq\Im\tau<0$, (see \cite{nonnenmacher}).

For any $T>0$, we define a function on the unit cosphere bundle $S^\ast M$ by
\begin{equation}
	\label{e:averagedamp}
\langle a\rangle_T(x,\xi)=\frac{1}{T}\int_0^T\pi^\ast a\circ\varphi_t(x,\xi)dt,
\end{equation}
where $\pi:S^\ast M\to M$ is the natural projection and $\varphi_t$ is the geodesic flow on $S^\ast M$ (see Section \ref{s:dynamic}). We also define the minimal and maximal asymptotic damping constants to be
\begin{equation}
	\label{e:asydamp}
a_-:=\sup_{T>0}\inf_{S^\ast M}\langle a\rangle_T=\lim_{T\to\infty}\inf_{S^\ast M}\langle a\rangle_T,\quad
a_+:=\inf_{T>0}\sup_{S^\ast M}\langle a\rangle_T=\lim_{T\to\infty}\sup_{S^\ast M}\langle a\rangle_T.
\end{equation}
Then there are only finite number of eigenvalues of $\mathcal{B}$ with $\Im\tau\not\in(-a_+-\varepsilon,-a_-+\varepsilon)$ for any $\varepsilon>0$ (see Lebeau \cite{lebeau}). Note that if there is a closed geodesic not intersecting $\supp a$, then $a_-=0$.


The main step to prove Theorem \ref{t:energy-decay} is to establish a spectral gap for $\mathcal{B}$ as $\Re\tau\to\infty$.

\begin{thm}
	\label{t:gap}
There exists $\beta,C_0>0$ such that for any $\tau\in\Sp(\mathcal{B})$ with $|\Re\tau|\geq C_0$, we have $\Im\tau<-\beta$.
In particular, 
\begin{equation}
	\label{e:gap}
G:=\inf\{-\Im\tau:\tau\in\Sp(\mathcal{B})\setminus\{0\}\}>0.
\end{equation}
\end{thm}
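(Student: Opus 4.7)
The plan is to prove the high-frequency statement by a semiclassical contradiction argument whose sole non-elementary input is the full-support theorem for semiclassical defect measures on compact hyperbolic surfaces from \cite{measupp}, which in turn rests on the fractal uncertainty principle of \cite{fullgap}. Using the symmetry $\tau\mapsto-\bar\tau$ of $\Sp(\mathcal{B})$, assume for contradiction a sequence $\tau_n\in\Sp(\mathcal{B})$ with $\lambda_n:=\Re\tau_n\to+\infty$ and $\beta_n:=-\Im\tau_n\to0^+$. Let $u_n$ be corresponding eigenfunctions with $\|u_n\|_{L^2(M)}=1$ and set $h_n:=1/\lambda_n$ as the semiclassical parameter.

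Multiplying the equation $P(\tau_n)u_n=0$ by $h_n^2$ and using $h_n\tau_n=1-ih_n\beta_n$, one obtains
\[
(-h_n^2\Delta-1)u_n=2ih_n(a-\beta_n)u_n+O_{L^2}(h_n^2),
\]
so that $u_n$ is an $O(h_n)$-quasimode for $-h^2\Delta$ at energy $1$. Taking the imaginary part of $\langle P(\tau_n)u_n,u_n\rangle=0$ yields the key scalar identity $\int_M a\,|u_n|^2\,dx=\beta_n$, which tends to zero. Passing to a subsequence, the $u_n$ admit a semiclassical defect measure $\mu$ on $T^\ast M$; standard arguments show that $\supp\mu\subset S^\ast M$ (the characteristic set of $-h^2\Delta-1$), that $\mu$ is $\varphi_t$-invariant (the subprincipal damping contributes only at $O(h_n^2)$ in the relevant commutator and thus drops out after division by $h_n$), and that $\mu$ has total mass $1$ by compactness of $M$. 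Passing to the limit in the scalar identity yields $\int_{S^\ast M}\pi^\ast a\,d\mu=0$, and since $a\geq 0$ with $a\not\equiv 0$, this forces $\mu$ to vanish on the nonempty open set $\{\pi^\ast a>0\}\subset S^\ast M$.

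This directly contradicts the main result of \cite{measupp}: every semiclassical defect measure of an $o(1)$-quasimode of $-h^2\Delta-1$ on a compact hyperbolic surface has full support on $S^\ast M$. The existence of $\beta,C_0>0$ as claimed follows. For the strict positivity $G>0$, combine this high-frequency gap with the classical fact (recorded from \cite{lebeau}) that $\Sp(\mathcal{B})\cap\{|\Re\tau|\leq C_0\}$ is a finite set, each of whose nonzero elements has strictly negative imaginary part; then $G$ is bounded below by the minimum of $\beta$ and of $-\Im\tau$ taken over this finite set. The main obstacle—absorbed wholesale into the cited theorem of \cite{measupp}—is the full-support theorem itself; everything above is routine semiclassical commutator and propagation arguments together with the elementary identity $\int_M a|u_n|^2\,dx=\beta_n$.
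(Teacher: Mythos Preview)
Your contradiction argument has a genuine gap at the step where you invoke \cite{measupp}. The full-support theorem there is \emph{not} proved for $o(1)$-quasimodes of $-h^2\Delta-1$; that statement is simply false (coherent states along a closed geodesic are $o(1)$-quasimodes whose defect measure is a delta on that orbit). What \cite{measupp} proves is the quantitative estimate~\eqref{e:main-measupp}, whose loss factor $\log(1/h)/h$ means the full-support conclusion is only available for quasimodes of quality $o(h/\log(1/h))$. Your $u_n$ satisfy $\|(-h_n^2\Delta-1)u_n\|=O(h_n\sqrt{\beta_n})$, which is $o(h_n)$ but there is no reason it should be $o(h_n/\log(1/h_n))$: the contradiction hypothesis gives you $\beta_n\to 0$ at an uncontrolled rate, and nothing rules out $\beta_n\sim (\log\lambda_n)^{-2}$. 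Plugging into~\eqref{e:main-measupp} then yields only $1\le C\sqrt{\beta_n}\log(1/h_n)$, which is no contradiction.

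This is not a cosmetic issue. The paper's own \S\ref{e:remark} makes exactly this point: feeding \eqref{e:main-measupp} directly into the damped problem produces only the resolvent bound \eqref{e:weak-resolvent} in the shrinking region \eqref{e:weak-domain}, hence a spectral gap of width $\sim(\log|\Re\tau|)^{-2}$ and the merely subexponential decay \eqref{e:weak-energy-decay}. Obtaining a \emph{uniform} gap requires rerunning the fractal-uncertainty machinery of \cite{measupp} with the free propagator $U_0(t)$ replaced by the damped propagator $U_q(t)$, so that the damping is built into the long-word operators from the start rather than treated as an $O(h)$ perturbation afterward. That is the content of \S\ref{s:proof-propagator} and Theorem~\ref{t:propagator-decay}; your proposed shortcut bypasses precisely the work that is needed.
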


We call $G$, defined in \eqref{e:gap}, the spectral gap of the eigenvalue problem \eqref{e:eigen} (or the damped wave equation \eqref{e:dampwave}) and $\beta$ in Theorem \ref{t:gap} the essential spectral gap. In particular, there are no eigenvalues $\tau\in\Sp(\mathcal{B})$ with $-G<\Im\tau<0$ and only finite number of eigenvalues $\tau\in\Sp(\mathcal{B})$ with $-\beta<\Im\tau<0$.

In the general spirit of \cite{csvw}, we can obtain Theorem \ref{t:energy-decay} on the decay of energy for damped wave equation \eqref{e:dampwave} from a suitable resolvent estimate when $|\Re\tau|\geq C_0$ and $\Im\tau>-\beta$, see Theorem \ref{t:resolvent}. 

We also remark that we can obtain an eigenvalue expansion, similar to \cite[Theorem 2]{schenckpressure}, with eigenvalues satisfying $\Im\tau>-\beta$, by the same argument there. Here $\beta$ is the essential spectral gap given in Theorem \ref{e:gap}, see also \cite{hitrik}. We refer to these references for details.

\subsection{Previous results}
\label{s:results}

On a general compact manifold, the solution to the equation \eqref{e:dampwave} stabilizes by any damping: for any $(v_0,v_1)\in\mathcal{H}$, $E(v(t))\to0$ as $t\to+\infty$, (see e.g. \cite{nonnenmacher}). However, uniform exponential decay of energy for all $(v_0,v_1)\in\mathcal{H}$ is equivalent to the geometric control condition (see e.g. \cite{blr}) for $\Omega=\{a>0\}$:
\begin{equation}
	\label{e:geocontrol}
\text{There exists } L=L(M,\Omega)>0 \text{ s.t. every geodesic of length } L \text{ on } M \text{ intersects } \Omega.
\end{equation}
(Or equivalently, with the notation \eqref{e:averagedamp}, $\langle a\rangle_L>0$ everywhere for some $L>0$.) This was first proved by Rauch--Taylor \cite{rauchtaylor} in various settings, see also Bardos--Lebeau--Rauch \cite{blr}, Lebeau \cite{lebeau}, and  Hitrik \cite{hitrik}. In particular, Lebeau \cite{lebeau} determined the optimal exponential decay rate
\begin{equation*}
\gamma_{\max}:=\sup\left\{\gamma\geq0\mid \exists C>0, \text{ such that } \forall (v_0,v_1)\in\mathcal{H}, E(v(t))\leq Ce^{-\gamma t}E(v(0))\right\}.
\end{equation*}
to be 
\begin{equation*}
\gamma_{\max}=2\min(G, a_-)
\end{equation*}
where $G$ is the spectral gap defined in \eqref{e:gap} and $a_-$ is the minimal asymptotic damping constant defined in \eqref{e:asydamp}. Note that the positivity of the spectral gap $G$ is not enough to ensure the uniform exponential decay, as shown by an example in Lebeau \cite{lebeau} for which $G>0$ but $\gamma_{\max}=a_-=0$. (See also \cite{renardy}.) 

When the geometric control condition \eqref{e:geocontrol} fails, there are no uniform decay for initial data in $\mathcal{H}$. But for regular initial data in $\mathcal{H}^s$ for some $s>0$, it is possible to obtain some uniform decay. In the most general situation, Lebeau \cite{lebeau} showed that there is a uniform logarithmic decay of energy for initial data in $\mathcal{H}^s$ which is also optimal as shown by an example with an elliptic closed geodesic not passing $\{a>0\}$ in the same paper. 

For special manifolds, we may get better decay rates for initial data in $\mathcal{H}^s$ with $s>0$. For example, Anantharaman--L\'{e}autaud \cite{nalinidamped} proved a sharp polynomial decay rate on tori (or square), see also earlier work of Liu--Rao \cite{liurao} and Phung \cite{phung}. Other examples with polynomial decay rates were shown by Burq--Hitrik \cite{burqhitrik}, Burq--Zuily \cite{burqzuily}, Burq--Zworski \cite{bz04}, Christianson--Schenck--Vasy--Wunsch \cite{csvw} and L\'{e}autaud--Lerner \cite{ll}. When the ``undamped set'' is a single hyperbolic closed orbit, Chrisitianson \cite{hans,hanscorrection} showed that there is a subexponential decay which is also optimal in general as shown by an example of Burq--Christianson \cite{burqchr}. This has been further generalized to the situation where the ``undamped set'' is normally hyperbolic by Christianson--Schenck--Vasy--Wunsch \cite{csvw} and hyperbolic with small pressure in the same paper and by Nonnenmancher--Rivi\`ere \cite[Appendix]{nore}.

On manifolds with negative curvature, Schenck \cite{schenckpressure, schenck} proved exponential decay for initial data in $\mathcal{H}^s$ with $s>0$ under a pressure condition which holds if the ``undamped set'' is thin and the damping is strong enough (see also Nonnenmacher \cite{nonnenmacher} for a condition on the ``least damped set''). Our result removes this condition for compact hyperbolic surfaces. 

The geodesic flows on negatively curved manifolds are chaotic, in particular, Anosov, so the damped wave equation can be viewed as an example of \emph{damped quantum chaotic} system. Both the results of Schenck and the present paper use techniques from the study of parallel \emph{closed} or \emph{open} quantum chaotic systems. In particular, Schenck uses the \emph{hyperbolic dispersive estimates} from the work of Nonnenmacher--Zworski \cite{nozw} on the pressure gaps of general open quantum chaotic systems, which is based on the ideas from earlier work of Anantharaman--Nonnenmancher \cite{anno} and Anantharaman \cite{anan} proving lower bounds on entropy of semiclassical measures for Laplacian eigenfunctions on Anosov manifolds. 

In our paper, we adapt the approach from a joint work with Dyatlov \cite{measupp} which shows that semiclassical measures for Laplacian eigenfunctions on compact hyperbolic surfaces have full support. The key idea in \cite{measupp} is a new approach called \emph{fractal uncertainty principle} developed by Dyatlov--Zahl \cite{hgap} and Bourgain--Dyatlov \cite{fullgap} to obtain \emph{essential spectral gaps} for \emph{convex co-compact hyperbolic surfaces} when the pressure condition fails. We refer to the papers above and references there for a detailed discussion of the fractal uncertainty principle and other aspects of quantum chaotic systems.

Finally, we mention that the eigenvalues of $\mathcal{B}$ satisfy a Weyl law which is proved by Markus--Matsaev \cite{mama}, see also Sj\"{o}strand \cite{sjostrand}. Sj\"{o}strand \cite{sjostrand} also establish a concentration result on the imaginary parts of eigenvalues in a potentially smaller strip than $\Im\tau\in(-a_-,-a_+)$. In the case of manifolds with ergodic geodesic flow (which is true for compact hyperbolic surfaces), Sj\"{o}strand's result states that most eigenvalues are near the line $\Im\tau=-\langle a\rangle_M$ where $\langle a\rangle_M$ is the average of $a$ over $M$. Anantharaman \cite{nalinideviation} refined this statement by showing a deviation result on the imaginary parts of eigenvalues. We refer to these references for a further discussion of the the distribution of eigenvalues of $\mathcal{B}$.

\subsection{Organization of the paper}
The paper is organized as follows. In Section \ref{s:prelims}, we review some basic facts about hyperbolic surfaces and semiclassical analysis, especially the exotic symbol calculus (section \ref{s:fancy-calculus}) developped in \cite{hgap} and \cite{measupp}. Then we formulate Theorem \ref{t:propagator-decay} about the decay of a general \emph{semiclassical damped propagator} localizing near the energy surface. In Section \ref{s:proof-propagator}, we prove Theorem \ref{t:propagator-decay} using a similar strategy as in \cite{measupp}. Roughly speaking, we separate the energy surface into the ``damped'' part and the ``undamped'' part. The \emph{damped} part has a natural decay while the \emph{undamped} part is ``fractal'' which allows us to use the \emph{fractal uncertainty principle} to obtain the decay. Finally in Section \ref{s:proof}, we prove Theorem \ref{t:energy-decay} and \ref{t:gap} by establishing a polynomial resolvent bound and adapting the standard arguments.

\subsection*{Acknowledgement}
I am very grateful to Kiril Datchev, Semyon Dyatlov and Maciej Zworski for the encouragement to work on this project and many great suggestions to the early draft of the paper. I would also like to thank Hans Christianson and Jared Wunsch for the helpful discussions about damped wave equations. Part of the work is done during my visit to YMSC at Tsinghua University for which I appreciate the hospitality.

\section{Preliminaries}
  \label{s:prelims}
  
In this part, we review some basic setup as in \cite[\S2]{measupp} as well as some modification we need to study the damped wave equation \eqref{e:dampwave}.

\subsection{Dynamics of the geodesic flows on hyperbolic surfaces}
	\label{s:dynamic}

Let $(M,g)$ be a compact hyperbolic surface and $T^*M\setminus 0$ denote the cotangent bundle $(x,\xi)\in T^*M$ with the zero section removed. Let $p\in C^\infty(T^*M\setminus 0;\mathbb R)$ be defined by
\begin{equation}
	\label{e:p-def}
p(x,\xi)=|\xi|_g.
\end{equation}
Then the homogeneous geodesic flow is the Hamiltonian flow of $p$,
\begin{equation}
	\label{e:p-flow}
\varphi_t:=\exp(tH_p):T^*M\setminus 0\to T^*M\setminus 0.
\end{equation}
We also write $S^\ast M=p^{-1}(1)$ to be the unit cosphere bundle.

From now on, we always assume that $M$ is orientable; if not, we may pass to a double cover of $M$. For the definition of the anisotropic calculi later (Section \ref{s:fancy-calculus}), we need the following notation for the weak stable/unstable spaces:
\begin{equation}
	\label{e:l-foliations}
L_s:=\Span(H_p,U_+),\quad
L_u:=\Span(H_p,U_-)\
\subset\
T(T^*M\setminus 0).
\end{equation}
so that $L_s,L_u$ are Lagrangian foliations, see~\cite[Lemma~4.1]{hgap}. Here we use an explicit frame on $T^*M\setminus 0$ consisting of four vector fields
\begin{equation}
	\label{e:canonical-fields}
H_p,U_+,U_-,D
\in C^\infty\big(T^*M\setminus 0;T(T^*M\setminus 0)\big),
\end{equation}
where $H_p$ is the generator of $\varphi_t$, $D=\xi\cdot\partial_\xi$
is the generator of dilations and the vector fields $U_\pm$
are defined on $S^*M$ as stable/unstable horocyclic vector fields and extended homogeneously to $T^*M\setminus 0$, so that the following commutation relations hold:
\begin{equation}
	\label{e:comm-rel}
[U_\pm,D]=[H_p,D]=0,\quad [H_p,U_\pm]=\pm U_\pm.
\end{equation}

\subsection{Operators and propagation}
  \label{s:prelim-opers}

We first briefly review the standard classes of semiclassical pseudodifferential operators with classical symbols $\Psi^k_h(M)$. We refer the reader to the book of Zworski~\cite{e-z} for an introduction to semiclassical analysis used in this paper, to~\cite[\S14.2.2]{e-z} for pseudodifferential operators on manifolds,
and to~\cite[\S E.1.5]{resonance} and ~\cite[\S2.1]{hgap} for the classes $\Psi^k_h(M)$ used here. The corresponding symbol classes is denoted by $S^k(T^*M)$ and we have the principal symbol map and a (non-canonical) quantization map
$$
\sigma_h:\Psi^k_h(M)\to S^k(T^*M),\quad
\Op_h:S^k(T^*M)\to \Psi^k_h(M).
$$
We also write $\Psi^{\comp}_h(M)$ to be the space of operators $A\in\Psi^k_h(M)$ with the wavefront set $\WF_h(A)$ being a compact subset of $T^*M$.

Applying sharp G\aa rding inequality (see \cite[Theorem 4.32]{e-z}) to the operator $I-A^\ast A$, we have the following $L^2$-norm bound on pseudodifferential operators:
\begin{equation}
	\label{e:basic-norm-bound}
A\in\Psi^0_h(M),\quad
\sup|\sigma_h(A)|\leq 1
\quad\Longrightarrow\quad
\|A\|_{L^2\to L^2}\leq 1+Ch.
\end{equation}

The operator $-h^2\Delta$ lies in $\Psi^2_h(M)$ and $\sigma_h(-h^2\Delta)=p^2$ with $p$ defined in~\eqref{e:p-def}. As in \cite{measupp}, we use an operator $P\in\Psi^{\comp}_h$ with principal symbol $p$ near the energy surface $S^\ast M$ instead of $-h^2\Delta$ for convenience. We fix a function
\begin{equation}
	\label{e:psi-P}
\psi_P\in C_0^\infty((0,\infty);\mathbb R),\quad
\psi_P(\lambda)=\sqrt{\lambda}\quad\text{for }\frac{1}{16}\leq \lambda\leq 16,
\end{equation}
and define the operator
\begin{equation}
	\label{e:the-P}
P:=\psi_P(-h^2\Delta),\quad
P^*=P.
\end{equation}

By the functional calculus of pseudodifferential operators,
see~\cite[Theorem~14.9]{e-z} or~\cite[\S8]{DimassiSjostrand}, we have
\begin{equation}
	\label{e:the-P-2}
P\in\Psi^{\comp}_h(M),\quad
\sigma_h(P)=p\quad\text{on }\{1/4\leq |\xi|_g\leq 4\}.
\end{equation}
The flow $\varphi_t$ is quantized (at least near $S^\ast M$) by the unitary propagator 
\begin{equation}
	\label{e:U-0-t}
U_0(t):=\exp(-itP/h):L^2(M)\to L^2(M).
\end{equation}

For a bounded operator $A:L^2(M)\to L^2(M)$, we use the notation
\begin{equation}
	\label{e:A-t}
A(t):=U_0(-t)A U_0(t).
\end{equation}
If $A\in \Psi^{\comp}_h(M)$, $\WF_h(A)\subset \{1/4<|\xi|_g<4\}$,
and $t$ is bounded uniformly in $h$, then Egorov's theorem~\cite[Theorem~11.1]{e-z} implies that
\begin{equation}
	\label{e:basic-egorov}
A(t)\in \Psi^{\comp}_h(M);\quad
\sigma_h(A(t))=\sigma_h(A)\circ\varphi_t.
\end{equation}

\subsection{Anisotropic calculi and long time propagation}
  \label{s:fancy-calculus}
  
To handle the propagation up to logarithmic time, we need a general calculus introduced in \cite[\S 3]{hgap} and in particular, the version developed in \cite[Appendix]{measupp}. Here we briefly review the definition and some basic properties and refer the reader to the references above for the details.

Fix $\rho\in [0,1)$, $\rho'\in[0,\rho/2]$ such that $\rho+\rho'<1$ and let $L\in \{L_u,L_s\}$ where the Lagrangian foliations $L_u,L_s$ are defined in~\eqref{e:l-foliations}. Define the class of $h$-dependent symbols
$S^{\comp}_{L,\rho,\rho'}(T^*M\setminus 0)$ as follows:
$a\in S^{\comp}_{L,\rho,\rho'}(T^*M\setminus 0)$ if
\begin{enumerate}
\item $a(x,\xi;h)$ is smooth in $(x,\xi)\in T^*M\setminus 0$,
defined for $0<h\leq 1$, and supported in an $h$-independent compact
subset of $T^*M\setminus 0$;
\item $a$ satisfies the derivative bounds
\begin{equation}
	\label{e:symbol-derby}
\sup_{x,\xi}|Y_1\ldots Y_mZ_1\ldots Z_k a(x,\xi;h)|\leq Ch^{-\rho k-\rho'm},\quad
0<h\leq 1
\end{equation}
for all vector fields
$Y_1,\dots,Y_m,Z_1,\dots,Z_k$ on $T^*M\setminus 0$ such that
$Y_1,\dots,Y_m$ are tangent to $L$.
Here the constant $C$ depends on $Y_1,\dots,Y_m$, $Z_1,\dots,Z_k$, and~$\varepsilon$ but does not depend on $h$.
\end{enumerate}
Moreover, we introduce the
class $S^{\comp}_{L,\rho}(T^*M\setminus 0)$ for $\rho\in[0,1)$ given by
\begin{equation*}
S^{\comp}_{L,\rho}(T^*M\setminus 0)=\bigcap_{\varepsilon>0} S^{\comp}_{L,\rho+\varepsilon,\varepsilon}(T^*M\setminus 0).
\end{equation*}

In terms of the frame~\eqref{e:canonical-fields}, the derivative
bounds~\eqref{e:symbol-derby} become
\begin{equation*}
\begin{split}
\sup_{x,\xi} \big|H_p^k U_+^\ell U_-^m D^n a(x,\xi;h)|&=\mathcal O(h^{-\rho(m+n)-\rho'(k+\ell)})\quad\text{for }L=L_s,\\
\sup_{x,\xi} \big|H_p^k U_-^\ell U_+^m D^n a(x,\xi;h)|&=\mathcal O(h^{-\rho(m+n)-\rho'(k+\ell)})\quad\text{for }L=L_u.
\end{split}
\end{equation*}

If $a\in C_0^\infty(T^*M\setminus 0)$ is an $h$-independent symbol, then it follows from the commutation relations~\eqref{e:comm-rel} that
$$
H_p^k U_+^\ell U_-^m D^n(a\circ\varphi_t)=e^{(m-\ell)t}(H_p^k U_+^\ell U_-^m D^na)\circ\varphi_t.
$$
Therefore
\begin{equation*}
\begin{split}
a\circ\varphi_t\in S^{\comp}_{L_s,\rho}(T^*M\setminus 0)\quad\text{uniformly in }
t,\quad
0\leq t\leq \rho\log(1/h)\\
a\circ\varphi_{-t}\in S^{\comp}_{L_u,\rho}(T^*M\setminus 0)\quad\text{uniformly in }
t,\quad
0\leq t\leq \rho\log(1/h).
\end{split}
\end{equation*}

Let $\Psi^{\comp}_{h,L,\rho,\rho'}(M)$ and $\Psi^{\comp}_{h,L,\rho}(M)$, $L\in \{L_u,L_s\}$, be the classes 
of pseudodifferential operators with symbols in $S^{\comp}_{L,\rho,\rho'}$, $S^{\comp}_{L,\rho}$ defined in \cite[Appendix]{measupp}, following the same construction as in~\cite[\S3]{hgap}. In particular the operators in these classes are pseudolocal and bounded on $L^2(M)$ uniformly in $h$. However, the remainders will become $\mathcal{O}(h^{1-\rho-\rho'})$ or $\mathcal O(h^{1-\rho-})$ because of the assumptions on derivatives~\eqref{e:symbol-derby}.

We also have the following (non-canonical) quantization procedures
$$
\Op_h^L:a\in S^{\comp}_{L,\rho,\rho'}(T^*M\setminus 0)\ \mapsto\ \Op_h^L(a)\in \Psi^{\comp}_{h,L,\rho,\rho'}(M),
$$
and
$$
\Op_h^L:a\in S^{\comp}_{L,\rho}(T^*M\setminus 0)\ \mapsto\ \Op_h^L(a)\in \Psi^{\comp}_{h,L,\rho}(M).
$$

The $\Psi^{\comp}_{h,L,\rho}$ calculus satisfies a version of Egorov's Theorem
with logarithmically long time: For $A=\Op_h(a)$
where $a\in C_0^\infty(\{1/4<|\xi|_g<4\})$ is independent of $h$,
\begin{align}
	\label{e:long-egorov-1}
A(t)&=\Op_h^{L_s}(a\circ\varphi_t)+\mathcal O(h^{1-\rho-})_{L^2\to L^2},\\
	\label{e:long-egorov-2}
A(-t)&=\Op_h^{L_u}(a\circ\varphi_{-t})+\mathcal O(h^{1-\rho-})_{L^2\to L^2}
\end{align}
uniformly in $t\in [0,\rho\log(1/h)]$, see \cite[Appendix]{measupp}. 

Finally, we have the following norm bound similar to \eqref{e:basic-norm-bound}, which is a consequence of sharp G\aa rding inequality (applying to $(\sup|a|)I-\Op_h^L(a)\Op_h^L(a)^\ast$) in this calculus (see \cite[Appendix]{measupp})
\begin{equation}
	\label{e:exotic-norm-bound}
a\in S_{L,\rho,\rho'}^{\comp}
\quad\Longrightarrow\quad
\|\Op_h^L(a)\|_{L^2\to L^2}\leq \sup|a|+Ch^{1-\rho-\rho'}.
\end{equation}


\subsection{Semiclassical eigenvalue problem and damped propagator}
\label{s:semi-eigen}
We consider the following general semiclassical eigenvalue (or more precisely, quasimode) problem 
\begin{equation}
	\label{e:semi-eigen}
(\mathcal{P}(z,h)-z)u:=(P-ihQ(z)-z)u=\mathcal{O}(h^\infty), \quad \|u\|_{L^2(M)}=1.
\end{equation}
where
\begin{equation}
	\label{e:domain-z}
z=1+h\omega, \quad \omega=\mathcal{O}(1)\in\mathbb{C}
\end{equation}
Here the principal part $P$ is defined as in \eqref{e:the-P} and $Q=Q(z)$ is a family of operators holomorphically depending on the parameter $z$ (or equivalently $\omega$ in a fixed neighborhood of 0 in $\mathbb{C}$). 
Moreover $Q=Q(z)$ satisfies
\begin{equation}
	\label{e:the-Q}
Q(z)\in\Psi_h^{\comp},\quad \WF_h(Q(z))\subset p^{-1}(1/4,4)
\end{equation}
and its principal symbol
\begin{equation}
	\label{e:symbol-Q}
q:=\sigma_h(Q(z))\geq0
\end{equation} 
is independent of $z$ and $q\not\equiv0$ on $S^\ast M$. We shall also assume that
\begin{equation}
	\label{e:wf-u}
\WF_h(u)\subset S^\ast M=p^{-1}(1).
\end{equation}

Now we turn to the Schr\"{o}dinger equation $ih\partial_t\Psi=\mathcal{P}(z,h)\Psi$ associated to the eigenvalue problem \eqref{e:semi-eigen} and denote by 
\begin{equation}
	\label{e:U-q-t}
U_q(t):=\exp(-it\mathcal{P}(z,h)/h), \quad t\geq0
\end{equation}
the solution operator. We call $U_q(t)$ the \emph{damped propagator}.

For $t$ bounded uniformly in $h$, we consider the following operators
\begin{equation}
	\label{e:v+-}
V_-(t)=U_0(-t)U_q(t),  \quad \text{ and } \quad V_+(t)=U_q(t)U_0(-t)
\end{equation}
Since
\begin{equation*}
\frac{d}{dt}U_q(t)=-\frac{i}{h}(P-ihQ)U_q(t), \quad \text{ and } \quad
\frac{d}{dt}U_0(-t)=\frac{i}{h}U_0(-t)P,
\end{equation*}
by the product rule, we have
\begin{equation*}
\frac{d}{dt}V_-(t)=-U_0(-t)QU_q(t)=-Q(t)V_-(t)
\end{equation*}
where for $Q$ we use the notation \eqref{e:A-t}. By Egorov's theorem, we have $Q(t)\in\Psi^{\comp}_h$ uniformly in $t$ with symbol $q(t)=q\circ\varphi_t$ and thus we get $V_-(t)\in\Psi^0_h$ uniformly in $t$ with symbol (see \cite[\S 8.2]{e-z})
\begin{equation}
	\label{e:v-symbol}
v_-(t)=\exp\left(-\int_0^tq\circ\varphi_sds\right).
\end{equation}
Similarly, (or using $V_+(t)=U_0(t)V_-(t)U_0(-t)$), we see $V_+(t)\in\Psi^0_h$ with symbol
\begin{equation*}
v_+(t)=\exp\left(-\int_0^tq\circ\varphi_{-s}ds\right).
\end{equation*}

\begin{lem}
For any $t\geq0$ fixed,
\begin{equation}
	\label{e:bound-damp-1}
\|U_q(t)\|_{L^2(M)\to L^2(M)}\leq 1+Ch.
\end{equation}
As a corollary, if $0\leq t\leq T_0:=2\log(1/h)$, then
\begin{equation}
	\label{e:bound-damp}
\|U_q(t)\|_{L^2(M)\to L^2(M)}\leq 1+Ch^{1/2}.
\end{equation}
\end{lem}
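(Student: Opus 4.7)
The plan is to exploit the factorization $U_q(t)=U_0(t)V_-(t)$ from \eqref{e:v+-}. Since $U_0(t)=\exp(-itP/h)$ is unitary on $L^2(M)$ ($P$ being self-adjoint), we have $\|U_q(t)\|_{L^2\to L^2}=\|V_-(t)\|_{L^2\to L^2}$, so it suffices to estimate $V_-(t)$. The computation preceding the lemma already shows that $V_-(t)\in\Psi^0_h(M)$ uniformly in bounded $t$, with principal symbol
\begin{equation*}
v_-(t)=\exp\Bigl(-\int_0^t q\circ\varphi_s\,ds\Bigr).
\end{equation*}

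For the first bound \eqref{e:bound-damp-1}, I would use that $q=\sigma_h(Q(z))\geq 0$ by \eqref{e:symbol-Q}, which implies $0\leq v_-(t)\leq 1$ pointwise. Therefore $\sup|\sigma_h(V_-(t))|\leq 1$ and the sharp G\aa rding estimate \eqref{e:basic-norm-bound} applied to $V_-(t)$ gives $\|V_-(t)\|_{L^2\to L^2}\leq 1+Ch$ for a constant $C=C(t)$. (More precisely, $I-V_-(t)^*V_-(t)$ has nonnegative principal symbol $1-v_-(t)^2$, so sharp G\aa rding yields $\|V_-(t)u\|^2\leq(1+Ch)\|u\|^2$.)

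For the corollary \eqref{e:bound-damp}, the key observation is that the operator $\mathcal{P}(z,h)=P-ihQ(z)$ is independent of $t$, so $U_q(\cdot)$ is a genuine one-parameter semigroup: $U_q(t_1+t_2)=U_q(t_1)U_q(t_2)$. I would fix a single reference time $t_0>0$ (say $t_0=1$), apply \eqref{e:bound-damp-1} at $t=t_0$ to obtain a single constant $C_0$ with $\|U_q(t_0)\|\leq 1+C_0 h$, and then iterate. Writing $t=n t_0+r$ with $0\leq r< t_0$ and $n\leq T_0/t_0$, the semigroup property gives
\begin{equation*}
\|U_q(t)\|\leq \|U_q(t_0)\|^n\,\|U_q(r)\|\leq (1+C_0 h)^{n+1}\leq \exp\bigl((n+1)C_0 h\bigr).
\end{equation*}
For $t\leq T_0=2\log(1/h)$ this is bounded by $\exp(C'h\log(1/h))=1+\mathcal{O}(h\log(1/h))$, which is in particular $\leq 1+Ch^{1/2}$ for $h$ small enough.

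The only subtle point is that the constant in \eqref{e:bound-damp-1} depends on $t$ through the (uniformly bounded) derivatives of $v_-(t)$ and hence cannot be applied directly at $t=T_0$; the iteration step circumvents this by fixing $t_0$ once and for all. No further ingredients beyond the standard calculus and sharp G\aa rding are needed, so I expect no serious obstacle.
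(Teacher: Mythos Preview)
Your proposal is correct and follows essentially the same route as the paper: use $q\ge 0$ to get $\sup|v_-(t)|\le 1$, apply the sharp G\aa rding bound \eqref{e:basic-norm-bound} to $V_-(t)$, and then iterate the semigroup property over $O(\log(1/h))$ steps to reach $T_0$. The paper writes $t=t_0k$ with $t_0$ bounded rather than fixing $t_0=1$ and carrying a remainder, but this is only a cosmetic difference; your remark that the constant in \eqref{e:bound-damp-1} depends on $t$ and hence the iteration is genuinely needed is exactly the point.
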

\begin{proof}
Since $q\geq0$, we see $0<v_\pm(t)\leq1$ and thus by \eqref{e:basic-norm-bound},
\begin{equation*}
\|V_\pm(t)\|_{L^2(M)\to L^2(M)}\leq 1+Ch.
\end{equation*}
This finishes the proof of \eqref{e:bound-damp-1} since $U_0(-t)$ is unitary. To see \eqref{e:bound-damp}, we only need to let $t=t_0k$ with $t_0$ uniformly bounded in $h$ and $k\in\mathbb{N}$ such that $k\leq C\log(1/h)$. Using \eqref{e:bound-damp-1} for $U_q(t_0)$, we have
\begin{equation*}
\|U_q(t)\|_{L^2(M)\to L^2(M)}\leq \|U_q(t_0)\|^k_{L^2(M)\to L^2(M)}
\leq (1+Ch)^{C\log(1/h)}\leq 1+Ch^{1/2}.
\end{equation*}
\end{proof}


\subsection{Decay of the damped propagator and spectral gap}
\label{s:gap-semi}

In the next section, we prove the following result on the decay of the damped propagator after very long time. The choice of the time here corresponds to twice the Ehrenfest time. To state the theorem, we first introduce a microlocal cutoff operator $\Pi:=\chi(-h^2\Delta)\in\Psi_h^{\comp}$ defined by functional calculus with 
\begin{equation}
	\label{e:chi}
\chi\in C_0^\infty(\mathbb{R};[0,1]),\quad \supp\chi\subset((1-\delta)^2,(1+\delta)^2),
\quad \chi\equiv1 \text{ near } 1.
\end{equation}
Here $\delta\in(0,1/4)$ is small enough, chosen depending on $q$ later in \eqref{e:delta}.

\begin{thm}
	\label{t:propagator-decay}
Let $U_q(t)$ be defined as \eqref{e:U-q-t} and $\Pi$ be defined as above and let $T_0:=2\log(1/h)$. Then there exists $\beta_0>0$ depending only on $M$ and $q$, such that for all $0<h<1$, 
\begin{equation}
	\label{e:propagator-decay}
\|U_q(T_0)\Pi\|_{L^2(M)\to L^2(M)}\leq Ch^{\beta_0}.
\end{equation}
\end{thm}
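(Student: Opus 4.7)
The plan is to adapt the strategy of \cite{measupp}, splitting the propagation at the Ehrenfest time $T_1:=\log(1/h)=T_0/2$ and treating the ``damped'' and ``undamped'' parts of phase space separately. Since $\Pi$ microlocalizes near the energy surface $S^*M$, we may work in the exotic calculi of Section~\ref{s:fancy-calculus} throughout. The starting point is the factorization
\begin{equation*}
U_q(T_0)=U_q(T_1)\,U_q(T_1)=V_+(T_1)\,U_0(T_0)\,V_-(T_1),
\end{equation*}
obtained from the identities $U_q(t)=V_+(t)U_0(t)=U_0(t)V_-(t)$. By the computation preceding \eqref{e:v-symbol}, for any $\rho<1$ we have $V_+(T_1)\in\Psi_{h,L_u,\rho}^{\comp}$ and $V_-(T_1)\in\Psi_{h,L_s,\rho}^{\comp}$, with principal symbols $v_+(T_1)=\exp(-\int_0^{T_1}q\circ\varphi_{-s}\,ds)$ and $v_-(T_1)=\exp(-\int_0^{T_1}q\circ\varphi_s\,ds)$ respectively. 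Since $U_0(T_0)$ is unitary, the theorem reduces to bounding $\|V_+(T_1)\,U_0(T_0)\,V_-(T_1)\,\Pi\|_{L^2\to L^2}$.

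Next, I would fix a small $\alpha>0$ (depending on $q$) and microlocally partition the energy shell: construct $A_-^d,A_-^u\in\Psi_{h,L_s,\rho}^{\comp}$ with $A_-^d+A_-^u\equiv I$ microlocally near $S^*M$ and $A_-^d$ supported where $\int_0^{T_1}q\circ\varphi_s\,ds\geq\alpha T_1$, and analogously $A_+^d,A_+^u\in\Psi_{h,L_u,\rho}^{\comp}$ for backward time. On the damped pieces the relevant symbol $v_\pm(T_1)$ is pointwise $\leq e^{-\alpha T_1}=h^\alpha$, so the exotic sharp G\aa rding bound \eqref{e:exotic-norm-bound} yields
\begin{equation*}
\|V_-(T_1)\,A_-^d\|_{L^2\to L^2},\ \|A_+^d\,V_+(T_1)\|_{L^2\to L^2}=\mathcal O(h^\alpha+h^{1-\rho-}),
\end{equation*}
which absorbs these contributions into the desired polynomial error. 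Up to negligible commutator terms (from moving $A_\pm^u$ past $V_\mp(T_1)$, which is permissible in the $L_s/L_u$ calculi since each $A_\pm^u$ is controlled in the same foliation as the relevant $V$), what remains is the ``doubly undamped'' piece
\begin{equation*}
A_+^u\,V_+(T_1)\,U_0(T_0)\,V_-(T_1)\,A_-^u\,\Pi.
\end{equation*}

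The right factor is microsupported near the ``forward undamped'' set $X_-^u\subset S^*M$ of points whose forward orbit up to $T_1$ spends proportion less than $\alpha$ in $\supp q$, smeared in the stable horocyclic direction at scale $h^\rho$; the left factor's microsupport is the analogous backward undamped set $X_+^u$, smeared in the unstable direction. Because $q\not\equiv 0$ and the geodesic flow is Anosov, a pigeonhole-on-scales argument as in \cite{measupp} shows that $X_\pm^u$ are \emph{porous} on every scale between $h$ and $1$ along the unstable (resp.\ stable) horocyclic foliations, with porosity constants depending only on $M$ and $q$. In horocyclic coordinates the operator $U_0(T_0)$ is (up to lower-order factors) a rescaling by $e^{T_0}=h^{-2}$ along the unstable direction and $e^{-T_0}=h^2$ along the stable one, and conjugating one of these by the semiclassical Fourier transform converts the sandwich into a model operator of the form $\mathbf 1_{X_+^u}\,\mathcal F_h\,\mathbf 1_{X_-^u}$ on $\mathbb R$. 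The Bourgain--Dyatlov fractal uncertainty principle \cite{fullgap}, applied exactly as in \cite{measupp}, then gives a bound $\mathcal O(h^{\beta_0})$ for some $\beta_0>0$ depending only on the porosity constants, completing the proof.

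The main obstacle is this last step: quantifying the porosity of $X_\pm^u$ uniformly at every scale $\geq h$ and carrying out the horocyclic reduction so that the Fourier-type operator to which FUP applies emerges cleanly from $U_0(T_0)$ sandwiched between the anisotropic cutoffs. This is the technical core, requiring the scale-by-scale combinatorial machinery of \cite{measupp} together with a careful choice of the parameters $\alpha$, $\rho$, and $\delta$ from \eqref{e:chi} so that all error terms (exotic-calculus remainders, commutators, and $h$-smoothing of the fractal cutoffs) are dominated by the FUP gain $h^{\beta_0}$.
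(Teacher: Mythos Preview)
Your approach diverges from the paper's in a way that leaves a genuine gap. The paper does not split into a single damped/undamped dichotomy and apply the fractal uncertainty principle once to the undamped piece. Instead it fixes two disjoint nonempty conic open sets $\mathcal U_1,\mathcal U_2$ (only $\mathcal U_2$ is tied to the damping), builds a partition $I=A_0+A_1+A_2$ with $\WF_h(A_j)\cap\overline{\mathcal U_j}=\emptyset$, and expands the propagator over words $\mathbf w\in\{1,2\}^{2N_1}$. For \emph{every} word the associated symbol avoids a fixed nonempty open set at each integer time, and that is precisely what yields porosity at every scale and makes Proposition~\ref{p:fup} applicable word by word. The ``undamped'' words (those with density of $1$'s below~$\alpha$) are then summed using the counting bound $\#\mathcal X\leq Ch^{-4\sqrt{\alpha}}$ of Lemma~\ref{l:X-count}, with $\alpha$ chosen small so that $h^{-4\sqrt{\alpha}}\cdot h^{\beta_{\mathrm{FUP}}}$ still decays. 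This word-by-word FUP together with the entropy-type counting is the heart of the argument and has no analogue in your sketch.

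The specific gap is the assertion that $X_-^u=\{\int_0^{T_1}q\circ\varphi_s\,ds<\alpha T_1\}$ is porous in the unstable direction at every scale. Nothing in \cite{measupp} proves porosity for a Birkhoff-average sublevel set; the porosity there comes from intersecting pullbacks of a \emph{fixed} open set at each step, which $X_-^u$ is not. In fact it generally fails: if $\gamma$ is a closed geodesic in $\{q=0\}$ (the case of interest), an unstable interval of length $e^{-j}$ centered on $\gamma$ stays in $\{q=0\}$ up to time roughly $j$, so every point in it has $\int_0^{T_1}q\circ\varphi_s\,ds\le (T_1-j)\|q\|_\infty$; for $j>T_1(1-\alpha/\|q\|_\infty)$ this whole interval lies in $X_-^u$, and porosity fails at those scales. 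Hence FUP cannot be applied directly to your doubly-undamped piece. A secondary issue: with $T_1=\log(1/h)$ the transverse derivative of $v_-(T_1)$ is $\int_0^{T_1}e^s(U_-q)\circ\varphi_s\,ds=O(h^{-1})$, not $O(h^{-\rho})$ for any $\rho<1$, so neither $v_\pm(T_1)$ nor your cutoffs $A_\pm^{d/u}$ lie in $S^{\comp}_{L,\rho}$; the paper works at time $N_1\approx\rho\log(1/h)$ to stay inside the calculus and absorbs the leftover time $T_0-2N_1$ at the end via~\eqref{e:bound-damp}.
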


As a corollary, we have the following result on the spectral gap for the semiclassical eigenvalue problem \eqref{e:semi-eigen}.

\begin{cor}
	\label{p:gap-semi}
There exists $\beta>0$ depending only on $M$ and $q$ and $h_0>0$, such that for $0<h<h_0$, if $z$ and $u$ satisfies the semiclassical eigenvalue problem \eqref{e:semi-eigen} and \eqref{e:domain-z} with the wavefront set condition \eqref{e:wf-u}, then
\begin{equation}
	\label{e:gap-semi}
\Im z\leq -\beta h.
\end{equation}
\end{cor}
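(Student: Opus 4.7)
The plan is to run the damped propagator $U_q(T_0)$ on $u$ for the Ehrenfest-type time $T_0 = 2\log(1/h)$, compare the two obvious expressions for $U_q(T_0)u$, and then take logarithms.

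First I would show that, since $(\mathcal{P}(z,h)-z)u = \mathcal O(h^\infty)$, one has
\begin{equation*}
U_q(T_0) u = e^{-iT_0 z/h}\, u + \mathcal O(h^\infty)_{L^2}.
\end{equation*}
The standard Duhamel trick does this: the difference $w(t) := U_q(t)u - e^{-itz/h}u$ satisfies $ih\partial_t w = \mathcal{P}(z,h)w + e^{-itz/h}(\mathcal{P}(z,h)-z)u$ with $w(0)=0$, so $w(t) = \frac{1}{ih}\int_0^t U_q(t-s) e^{-isz/h}(\mathcal{P}-z)u\,ds$. Combined with the bound $\|U_q(t-s)\|_{L^2\to L^2}\leq 1+Ch^{1/2}$ from \eqref{e:bound-damp} and the fact that $\Im z = h\Im\omega = \mathcal{O}(h)$ so $|e^{-isz/h}|$ is bounded on $[0,T_0]$, the integral is $\mathcal{O}(h^{-1} T_0 \cdot h^\infty) = \mathcal{O}(h^\infty)$.

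Next I would use the wavefront set assumption \eqref{e:wf-u}. Since the symbol of $\Pi = \chi(-h^2\Delta)$ equals $1$ in an open neighborhood of $S^*M = p^{-1}(1)$, we have $(I-\Pi)u = \mathcal{O}(h^\infty)_{L^2}$, hence $U_q(T_0)u = U_q(T_0)\Pi u + \mathcal{O}(h^\infty)$ again using \eqref{e:bound-damp}. Applying Theorem \ref{t:propagator-decay} gives
\begin{equation*}
\|U_q(T_0)u\|_{L^2} \leq \|U_q(T_0)\Pi\|_{L^2\to L^2}\|u\|_{L^2} + \mathcal O(h^\infty) \leq Ch^{\beta_0}.
\end{equation*}
Combining with the first step and taking $L^2$ norms yields
\begin{equation*}
e^{T_0\,\Im\omega} = |e^{-iT_0 z/h}|\,\|u\|_{L^2} \leq Ch^{\beta_0} + \mathcal O(h^\infty) \leq 2Ch^{\beta_0}
\end{equation*}
for $h$ small. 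Taking logarithms and recalling $T_0 = 2\log(1/h)$, we get
\begin{equation*}
\Im\omega \leq -\frac{\beta_0}{2} + \frac{\log(2C)}{2\log(1/h)},
\end{equation*}
which is bounded above by $-\beta_0/4$ for $h$ sufficiently small. Since $\Im z = h\,\Im\omega$, this gives the claimed gap with $\beta = \beta_0/4$.

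There is no real obstacle here: the corollary is essentially a packaging of Theorem \ref{t:propagator-decay} via Duhamel and the wavefront set hypothesis. The only minor points to watch are that the $\mathcal{O}(h^\infty)$ quasimode error is not destroyed by the $h^{-1}$ in the Schr\"odinger equation (it isn't, because $T_0$ only grows logarithmically), and that $\Pi$ really acts as the identity on $u$ modulo $\mathcal{O}(h^\infty)$, which follows directly from the choice of cutoff $\chi$ in \eqref{e:chi} together with \eqref{e:wf-u}.
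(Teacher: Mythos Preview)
Your argument is correct and follows the same route as the paper's own proof: compare $U_q(T_0)u$ with $e^{-iT_0z/h}u$ via Duhamel (the paper does this by differentiating $e^{itz/h}U_q(t)$ and integrating, which is the same computation), insert $\Pi$ using \eqref{e:wf-u}, apply Theorem~\ref{t:propagator-decay}, and take logarithms to reach $\beta=\beta_0/4$. One small imprecision: you say $|e^{-isz/h}|$ is ``bounded'' on $[0,T_0]$, but in fact it is only polynomially bounded in $h^{-1}$ (as in \eqref{e:log-polynomial}); this does not affect the argument since a polynomial loss is absorbed into $\mathcal O(h^\infty)$.
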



\begin{proof}
By definition \eqref{e:U-q-t} of $U_q(t)$, we have
\begin{equation*}
\frac{d}{dt}e^{itz/h}U_q(t)=-\frac{i}{h}e^{itz/h}U_q(t)(\mathcal{P}(z,h)-z).
\end{equation*}
Applying this to $u$ and integrate from 0 to $T_0$, we get by \eqref{e:semi-eigen} and \eqref{e:bound-damp}
\begin{equation*}
e^{iT_0z/h}U_q(T_0)u=u+\mathcal{O}_{L^2}(h^\infty).
\end{equation*}
Here we also use that for $|t|\leq T_0=2\log(1/h)$ and $z$ satisfying \eqref{e:domain-z},
\begin{equation}
	\label{e:log-polynomial}
|e^{itz/h}|=e^{-t\Im z/h}\leq h^{-2|\Im z|/h}\leq h^{-N}
\end{equation}
for some $N>0$. By \eqref{e:wf-u}, we have $\Pi u=u+\mathcal{O}_{L^2}(h^\infty)$ and thus by \eqref{e:log-polynomial} again, 
\begin{equation*}
U_q(T_0)\Pi u=e^{-iT_0z/h}u+\mathcal{O}_{L^2}(h^\infty).
\end{equation*}
Taking the $L^2$-norm and using \eqref{e:propagator-decay}, we have $e^{T_0\Im z/h}\leq Ch^{\beta_0}$, and thus
\begin{equation*}
\frac{\Im z}{h}\leq\frac{1}{T_0}(\log C+\beta_0\log h)
=\frac{\log C}{2\log(1/h)}-\frac{\beta_0}{2},
\end{equation*}
which gives \eqref{e:gap-semi} with $\beta=\beta_0/4$ if $h$ is small enough.
\end{proof}

\section{Decay of the damped propagator}
\label{s:proof-propagator}
In this section, we prove Theorem \ref{t:propagator-decay} using a similar strategy as in \cite{measupp}.

\subsection{Partition of unity}
\label{s:partition}
We first recall the partition of unity used in \cite{measupp} with some modification adapting to our situation. First, we fix conic open sets 
$$
\mathcal U_1,\mathcal U_2\subset T^*M\setminus 0,\quad
\mathcal U_1,\mathcal U_2\neq \emptyset,\quad
\overline{\mathcal U_1}\cap\overline{\mathcal U_2}=\emptyset
$$
and we require that there exists $\delta=\delta(q)>0$ such that 
\begin{equation}
	\label{e:delta}
\min\left\{\left.\int_0^1 q\circ\varphi_s(x,\xi)ds\right|(x,\xi)\in\overline{\mathcal U_2}\cap p^{-1}(1-2\delta,1+2\delta)\right\}>0
\end{equation}
which is possible since $q\geq0$ and $q|_{S^\ast M}\not\equiv0$.

We introduce a pseudodifferential partition of unity
$$
I=A_0+A_1+A_2,\quad
A_0\in\Psi^0_h(M),\quad
A_1,A_2\in\Psi^{\comp}_h(M)
$$
such that
\begin{itemize}
\item $A_0$ is microlocalized away from the cosphere bundle $S^*M$.
More specifically, we put $A_0:=\psi_0(-h^2\Delta)$ where
$\psi_0\in C^\infty(\mathbb R;[0,1])$ satisfies
\begin{equation}
	\label{e:psi-0}
\supp\psi_0\cap [(1-\delta)^2,(1+\delta)^2]=\emptyset,\quad
\supp(1-\psi_0)\subset ((1-2\delta)^2, (1+2\delta)^2).
\end{equation}
This implies that
$$
\WF_h(A_0)\cap p^{-1}([1-\delta,1+\delta])=\emptyset,\quad
\WF_h(I-A_0)\subset p^{-1}(1-2\delta,1+2\delta).
$$
\item $A_1,A_2$ are microlocalized in an energy shell and away from $\mathcal U_1,\mathcal U_2$, that is
\begin{equation*}
\WF_h(A_1)\cup\WF_h(A_2)\subset p^{-1}(1-2\delta,1+2\delta),\quad
\WF_h(A_1)\cap \overline{\mathcal U_1}=\WF_h(A_2)\cap\overline{\mathcal U_2}=\emptyset.
\end{equation*}
\item $A_1$ is \emph{damped}, in the sense that there exists $\eta=\eta(q)>0$ such that
\begin{equation}
	\label{e:a-1-damp}
\supp a_1\subset\left\{(x,\xi):\int_0^1q\circ\varphi_sds>\eta\right\}.
\end{equation}
\item Finally, we choose $A_1,A_2$ so that
\begin{equation}
  \label{e:A-symbol}
0\leq a_\ell\leq 1\quad\text{where }
a_\ell:=\sigma_h(A_\ell),\quad \ell=0,1,2.
\end{equation}
\end{itemize}

For each $n\in\mathbb N_0$, we define the set of words of length $n$,
$$
\mathcal W(n):=\{1,2\}^n=\big\{\mathbf w=w_0\dots w_{n-1}\mid
w_0,\dots,w_{n-1}\in \{1,2\}\big\}.
$$
Recall that in \cite[\S 3.1]{measupp}, we dynamically refine the partition of unity and define the operators
\begin{equation}
	\label{e:op-word-0}
A_\mathbf{w}=A_{w_{n-1}}(n-1) A_{w_{n-2}}(n-2)\cdots A_{w_1}(1) A_{w_0}(0)
\end{equation}
with symbols 
\begin{equation}
	\label{e:symbol-word-0}
a_{\mathbf{w}}=\prod_{j=0}^{n-1} \big(a_{w_j}\circ\varphi_j\big).
\end{equation}

Instead, we define for each word $\mathbf{w}\in\mathcal{W}(n)$, the damped propagator corresponding to the word $\mathbf{w}$ as
\begin{equation}
	\label{e:damp-word}
U_\mathbf{w}:=U_q(1)A_{w_{n-1}}U_q(1)A_{w_{n-2}}\cdots U_q(1)A_{w_0}.
\end{equation}
We also define the following operators which are the ``damped'' analogues of $A_{\mathbf{w}}$:
\begin{equation}
	\label{e:op-word}
A_\mathbf{w}^-=U_0(-n)U_\mathbf{w},\quad
A_{\mathbf{w}}^+=U_{\mathbf{w}}U_0(-n)=A_{\mathbf{w}}^-(-n).
\end{equation}

If $n$ is bounded independently of $h$, then $A_\mathbf{w}^\pm\in \Psi^{\comp}_h(M)$ with principal symbol $\sigma_h(A_{\mathbf w}^\pm)=a_{\mathbf w}^\pm$ given by
\begin{equation}
	\label{e:symbol-word}
\begin{split}
a_{\mathbf w}^-=\prod_{j=0}^{n-1}\left[a_{w_j}\exp\left(-\int_0^1q\circ\varphi_sds\right)\right]\circ\varphi_j,\\
a_{\mathbf w}^+=a_{\mathbf{w}}^-\circ\varphi_{-n}=\prod_{j=1}^n\left[a_{w_{n-j}}\exp\left(-\int_0^1q\circ\varphi_sds\right)\right]\circ\varphi_{-j}.
\end{split}
\end{equation}
To see this, we notice that $A_{\mathbf{w}}$ defined in \eqref{e:op-word-0} can be rewritten as
\begin{equation*}
A_{\mathbf{w}}=U_0(-n)U_0(1)A_{w_{n-1}}U_0(1)A_{w_{n-2}}\cdots U_0(1)A_{w_0}.
\end{equation*}
Therefore $A_{\mathbf{w}}^-$ is exactly $A_{\mathbf{w}}$ with all $U_0(1)$ replaced by $U_q(1)$ or equivalently with $A_j$ replaced by 
\begin{equation}
\label{e:damp-op-A}
\widetilde{A}_j:=U_0(-1)U_q(1)A_j=V_-(1)A_j,\quad j=1,2.
\end{equation}
Meanwhile, $a_{\mathbf{w}}^-$ is exactly $a_{\mathbf{w}}$ defined in \eqref{e:symbol-word-0} with $a_j$ replaced by the symbol of $\widetilde{A}_j$ which is given by (see \eqref{e:v-symbol})
\begin{equation}
	\label{e:damp-a}
\widetilde{a}_j:=a_jv_-(1)=a_j\exp\left(-\int_0^1q\circ\varphi_sds\right).
\end{equation}
This allows us to apply the theory developed in \cite{measupp} for $A_{\mathbf{w}}$ to $A_{\mathbf{w}}^\pm$. By \eqref{e:a-1-damp}, we have
\begin{equation}
	\label{e:damp-a-a}
0\leq\widetilde{a}_1\leq e^{-\eta}a_1,\quad 0\leq \widetilde{a}_2\leq a_2. 
\end{equation}

For a subset $\mathcal E\subset \mathcal W(n)$, we also define
the operators $U_{\mathcal{E}}$, $A_{\mathcal E}^\pm$ and the symbol $a_{\mathcal E}^\pm$ by
\begin{equation}
	\label{e:A-subset}
U_{\mathcal E}:=\sum_{\mathbf w\in \mathcal E}U_{\mathbf w},\quad
A^\pm_{\mathcal E}:=\sum_{\mathbf w\in \mathcal E}A^\pm_{\mathbf w},
\quad
a^\pm_{\mathcal E}:=\sum_{\mathbf w\in \mathcal E}a^\pm_{\mathbf w}.
\end{equation}

In particular, we have 
\begin{equation}
	\label{e:full-word}
U_{\mathcal{W}(n)}=\left(U_q(1)(A_1+A_2)\right)^n=\left(U_q(1)(I-A_0)\right)^n.
\end{equation}
Moreover since $U_0(-n)$ is unitary, 
\begin{equation}
	\label{e:equal-norm}
\|U_{\mathcal{E}}\|_{L^2\to L^2}=\|A_{\mathcal E}^\pm\|_{L^2\to L^2}.
\end{equation}

\subsection{Long words and damped propagation}
Now we can proceed as in \cite{measupp}. Take $\rho\in (0,1)$ very close to 1, to be chosen later (in \eqref{e:fup}), and put
$$
N_0:=\Big\lceil\frac{\rho}{4}\log(1/h)\Big\rceil\in\mathbb N,\quad
N_1:=4N_0\approx \rho\log(1/h).
$$
Then we have the following lemma parallel to \cite[Lemma 3.2, Lemma 4.4]{measupp} with the same statements for $a_{\mathbf{w}}$ and $A_{\mathbf{w}}$.
\begin{lem}
	\label{l:long-word-egorov}
For each $\mathbf w\in \mathcal W(N_0)$ we have (with bounds independent of $\mathbf w$)
\begin{equation}
	\label{e:lwe-1}
a_{\mathbf w}^-\in S^{\comp}_{L_s,\rho/4}(T^*M\setminus 0),\quad
A_{\mathbf w}^-=\Op_h^{L_s}(a_{\mathbf w}^-)+\mathcal O(h^{3/4})_{L^2\to L^2}.
\end{equation}
If instead $\mathbf w\in \mathcal W(N_1)$, then
\begin{equation}
	\label{e:lwe-2}
a_{\mathbf w}^-\in S^{\comp}_{L_s,\rho}(T^*M\setminus 0),\quad
A_{\mathbf w}^-=\Op_h^{L_s}(a_{\mathbf w}^-)+\mathcal O(h^{1-\rho-})_{L^2\to L^2}.
\end{equation}
Moreover, for any subset $\mathcal{E}$ of $\mathcal{W}(N_0)$, we have
\begin{equation}
	\label{e:lwe-3}
a_{\mathcal E}^-\in S^{\comp}_{L_s,1/2,1/4}(T^*M\setminus 0),\quad
A_{\mathcal E}^-=\Op_h^{L_s}(a_{\mathcal E}^-)+\mathcal O(h^{1/2})_{L^2\to L^2}.
\end{equation}
All of above are true if we replace the sign $-$ by $+$ and $L_s$ by $L_u$.
\end{lem}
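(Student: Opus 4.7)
The plan is to reduce the statement to the corresponding results for $A_{\mathbf w}$ and $a_{\mathbf w}$ proved in \cite[Lemma 3.2, Lemma 4.4]{measupp}, via the substitution $A_j\to \widetilde A_j$ already indicated in the text preceding the lemma.

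The first step is to rewrite $A_{\mathbf w}^-$ in a form parallel to \eqref{e:op-word-0}. Substituting $U_q(1)=U_0(1)V_-(1)$ into \eqref{e:damp-word} and pulling the leftover $U_0(\pm j)$ factors through each $\widetilde A_{w_j}$ by means of \eqref{e:A-t}, I would establish
\[
A_{\mathbf w}^- = \widetilde A_{w_{n-1}}(n-1)\,\widetilde A_{w_{n-2}}(n-2)\cdots \widetilde A_{w_0}(0),
\]
which is exactly formula \eqref{e:op-word-0} with every $A_{w_j}$ replaced by $\widetilde A_{w_j}=V_-(1)A_{w_j}$; see \eqref{e:damp-op-A}. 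Since $0\le \widetilde a_j\le a_j\le 1$ by \eqref{e:damp-a-a} and $\widetilde A_j$ has the same wavefront set as $A_j$, the operator $\widetilde A_j$ enters the symbolic calculus just as $A_j$ did in \cite{measupp}. The $+$ case then follows from $A_{\mathbf w}^+=A_{\mathbf w}^-(-n)$ by running the same argument with $L_s$ replaced by $L_u$ and \eqref{e:long-egorov-1} replaced by \eqref{e:long-egorov-2}.

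Next, to establish \eqref{e:lwe-1} and \eqref{e:lwe-2}, I would feed the pulled-back symbols $\widetilde a_{w_j}\circ\varphi_j$ through the $L_s$-anisotropic calculus. Each $\widetilde a_{w_j}$ is smooth, $h$-independent, and compactly supported in $\{1/4<|\xi|_g<4\}$, so the commutation relations \eqref{e:comm-rel} give $\widetilde a_{w_j}\circ\varphi_j\in S^{\comp}_{L_s,\rho}(T^*M\setminus 0)$ uniformly in $0\le j\le \rho\log(1/h)$. Composing the quantizations step by step via the product rule in $\Psi^{\comp}_{h,L_s,\rho,\rho'}(M)$ identifies $A_{\mathbf w}^-$ with $\Op_h^{L_s}(a_{\mathbf w}^-)$ up to a remainder which, after summing the $N_0$ or $N_1$ per-step errors, matches the bounds claimed in the lemma. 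The exponent $\rho/4$ in \eqref{e:lwe-1} reflects the choice $N_0\approx (\rho/4)\log(1/h)$, at which scale transverse derivatives of $a_{\mathbf w}^-$ along $U_-, D$ stay bounded by $h^{-\rho/4}$; \eqref{e:lwe-2} is the analogous statement for the longer time $N_1\approx \rho\log(1/h)$. Up to the harmless replacement of $a_j$ by $\widetilde a_j$, this is exactly the bookkeeping carried out in \cite[Lemma 3.2]{measupp}.

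The main obstacle is the subset-sum estimate \eqref{e:lwe-3}. A naive application of \eqref{e:lwe-1} to each $\mathbf w\in\mathcal E$ already fails for higher-order transverse derivatives, since $|\mathcal E|$ may be as large as $2^{N_0}$ and cannot be absorbed by the per-word derivative bound $h^{-\rho k/4}$ when $k$ is large. To recover the sharper class $S^{\comp}_{L_s,1/2,1/4}(T^*M\setminus 0)$ with the $\mathcal O(h^{1/2})$ remainder, I would replicate the combinatorial argument of \cite[Lemma 4.4]{measupp}: the key mechanism is the partition identity $\widetilde a_1+\widetilde a_2\le a_1+a_2=1-a_0\le 1$ along each orbit, which, applied inductively on the word length and combined with the Leibniz expansion of derivatives of $a_{\mathcal E}^-$ along the frame $H_p, U_+, U_-, D$, prevents combinatorial blow-up and yields the desired bounds of size $h^{-k/2-m/4}$ on derivatives involving $k$ transverse factors ($U_-$ or $D$) and $m$ tangential factors ($H_p$ or $U_+$). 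The sharp G\aa rding-type bound \eqref{e:exotic-norm-bound} in the $L_s$-calculus then gives the $\mathcal O(h^{1/2})$ remainder in \eqref{e:lwe-3}. The transfer from $a_j$ to $\widetilde a_j$ is automatic since \eqref{e:damp-a-a} preserves $0\le\widetilde a_j\le 1$.
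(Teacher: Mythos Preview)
Your proposal is correct and follows exactly the paper's approach: reduce to \cite[Lemma~3.2, Lemma~4.4]{measupp} via the substitution $A_j\to\widetilde A_j$, $a_j\to\widetilde a_j$, and handle the $+$ case by reversing the flow so that $L_s$ and $L_u$ swap. One small correction: the $\mathcal O(h^{1/2})$ remainder in \eqref{e:lwe-3} is the accumulated Egorov/product error in the anisotropic calculus, not the sharp G\aa rding bound \eqref{e:exotic-norm-bound}; the latter would only yield $h^{1-1/2-1/4}=h^{1/4}$ for this symbol class and is used downstream (in Proposition~\ref{p:u-y}) to pass from $\|\Op_h^{L_s}(a_{\mathcal E}^-)\|$ to $\sup|a_{\mathcal E}^-|$.
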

\begin{proof}
The proof of the statements for $a_{\mathbf{w}}^-$ and $A_{\mathbf{w}}^-$ is essentially the same as \cite[Lemma 3.2, Lemma 4.4]{measupp} with $a_j$ replaced by $\widetilde{a}_j$ defined in \eqref{e:damp-a} and $A_j$ replaced by $\widetilde{A}_j$ defined in \eqref{e:damp-op-A}. We refer the details of the proof to \cite{measupp}. For $a_{\mathbf{w}}^+$ and $A_{\mathbf{w}}^+$ we simply need to reverse the direction of the flow $\varphi_t$ and notice that this exchanges the unstable foliation $L_u$ and stable foliation $L_s$.
\end{proof}

Now, we define the set of \emph{damped} words. We fix a parameter $\alpha\in(0,1)$ and define the set of damped words of length $N_0$ to be
\begin{equation}
	\label{e:Z-damp-set}
\mathcal Z:=\{\mathbf{w}=w_0w_1\cdots w_{N_0-1}\in\mathcal{W}(N_0)\mid
\#\{j\in\{0,\ldots,N_0-1\}\mid w_j=1\}\geq \alpha N_0\}.
\end{equation}
Next we define the set of damped words $\mathcal Y\subset \mathcal W(2N_1)$ by iterating~$\mathcal Z$. More specifically, we write words in $\mathcal W(2N_1)$ as concatenations $\mathbf w^{(1)}\dots \mathbf w^{(8)}$ where each of the words $\mathbf w^{(1)},\dots,\mathbf w^{(8)}\in \mathcal W(N_0)$, define the partition
\begin{equation}
	\label{e:XY-def}
\begin{aligned}
\mathcal W(2N_1)&=\mathcal X\sqcup\mathcal Y,\\
\mathcal X&:=\{\mathbf w^{(1)}\dots \mathbf w^{(8)}\mid \mathbf w^{(\ell)}\notin\mathcal Z\quad\text{for all }\ell\},\\
\mathcal Y&:=\{\mathbf w^{(1)}\dots \mathbf w^{(8)}\mid \text{there exists $\ell$ such that }\mathbf w^{(\ell)}\in\mathcal Z\}
\end{aligned}
\end{equation}

In our argument the parameter $\alpha$ will be taken small (in Proposition \ref{p:bound-long-words}) so that the set $\mathcal X$ is not too large. The size of $\mathcal X$ is estimated by the following lemma (see \cite[Lemma 3.3]{measupp})
\begin{lem}
	\label{l:X-count}
The number of elements in $\mathcal X$ is bounded by (here $C$ may depend on $\alpha$)
\begin{equation}
	\label{e:X-count}
\#(\mathcal X)\leq Ch^{-4\sqrt{\alpha}}.
\end{equation}
\end{lem}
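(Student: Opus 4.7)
The approach is a purely combinatorial counting, paralleling \cite[Lemma 3.3]{measupp}; none of the analytic content of the $\Psi^{\comp}_{h,L,\rho}$ calculus is needed, only the bare cardinality. The key observation is that the condition defining $\mathcal{X}$ factors across the eight blocks of length $N_0$, so
\begin{equation*}
\#(\mathcal{X}) = \big(\#(\mathcal{W}(N_0)\setminus\mathcal{Z})\big)^8.
\end{equation*}
A word in $\mathcal{W}(N_0)\setminus\mathcal{Z}$ contains strictly fewer than $\alpha N_0$ occurrences of the symbol $1$, so the first reduction is
\begin{equation*}
\#(\mathcal{W}(N_0)\setminus\mathcal{Z}) = \sum_{k=0}^{\lceil\alpha N_0\rceil-1} \binom{N_0}{k}.
\end{equation*}

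The main quantitative input is the standard entropy bound on partial binomial sums: for $\alpha<1/2$,
\begin{equation*}
\sum_{k<\alpha N_0} \binom{N_0}{k} \leq 2^{N_0 H_2(\alpha)}, \qquad H_2(\alpha) := -\alpha\log_2\alpha - (1-\alpha)\log_2(1-\alpha),
\end{equation*}
which follows from Stirling's formula or Chernoff's inequality. Substituting $N_0 \leq \tfrac{\rho}{4}\log(1/h)+1$ from the definition and raising to the eighth power yields
\begin{equation*}
\#(\mathcal{X}) \leq C \, h^{-2\rho H_e(\alpha)}, \qquad H_e(\alpha) := (\log 2)\, H_2(\alpha) = -\alpha\ln\alpha-(1-\alpha)\ln(1-\alpha).
\end{equation*}

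To finish, I would compare $H_e(\alpha)$ with $\sqrt{\alpha}$. Since $H_e(\alpha) = \alpha\log(1/\alpha) + \alpha + O(\alpha^2)$ as $\alpha\to 0^+$, one has $H_e(\alpha) \leq 2\sqrt{\alpha}$ for $\alpha$ small, and together with $\rho < 1$ this gives $\#(\mathcal{X})\leq C h^{-4\sqrt{\alpha}}$. For $\alpha$ bounded away from $0$ the claim is trivial because $\#(\mathcal{X}) \leq 2^{2N_1} \leq h^{-2\rho\log 2}$, which is a fixed negative power of $h$ absorbable into the constant. I do not anticipate any genuine obstacle; the only care required is tracking the numerical constants so that, after combining the factor $8$ (number of blocks) with the factor $\rho/4$ (size of each block) and the asymptotic behaviour of $H_e(\alpha)$, the exponent $-4\sqrt{\alpha}$ emerges cleanly.
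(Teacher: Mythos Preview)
Your argument is correct and follows exactly the combinatorial counting of \cite[Lemma~3.3]{measupp}, which is what the paper cites without reproducing. One wording slip: $h^{-2\rho\log 2}$ is not ``absorbable into the constant''; the point for $\alpha$ away from $0$ is rather that $4\sqrt{\alpha}\geq 2\rho\log 2$, so $h^{-2\rho\log 2}\leq h^{-4\sqrt{\alpha}}$ (and in fact the inequality $H_e(\alpha)\leq 2\sqrt{\alpha}$ holds on all of $(0,1)$, so the case split is unnecessary once you restrict the entropy tail bound to $\alpha<1/2$).
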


\subsection{Estimating damped words}
\label{s:damp}
We start by proving the following norm bound on the set of damped words.
\begin{prop}
	\label{p:u-y}
There exist $C,\beta_{\mathrm{damp}}>0$ only depending on $\alpha$ and $q$ such that
\begin{equation}
	\label{e:norm-damped}
\|U_{\mathcal{Y}}\|_{L^2\to L^2}\leq Ch^{\beta_{\mathrm{damp}}}.
\end{equation}
\end{prop}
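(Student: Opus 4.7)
The plan is to exploit that every word in $\mathcal{Y}$ contains at least one length-$N_0$ sub-block from the damped set $\mathcal{Z}$, and to show directly that the operator sum $U_{\mathcal{Z}}$ already has $L^2$-norm polynomially small in $h$. The damping gain $e^{-\eta}$ from \eqref{e:a-1-damp}, applied at each of the $\geq\alpha N_0$ indices with $w_j=1$ in a word of $\mathcal{Z}$, accumulates multiplicatively into a factor $e^{-\eta\alpha N_0}\sim h^{\eta\alpha\rho/4}$; the remaining seven length-$N_0$ blocks only contribute bounded factors.

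First, I would partition $\mathcal{Y}$ by the position of the first damped sub-block:
\begin{equation*}
\mathcal{Y} = \bigsqcup_{\ell=1}^{8}\mathcal{Y}^{(\ell)},\qquad
\mathcal{Y}^{(\ell)} := \{\mathbf{w}^{(1)}\cdots\mathbf{w}^{(8)} :\ \mathbf{w}^{(k)}\notin\mathcal{Z}\ \text{for}\ k<\ell,\ \mathbf{w}^{(\ell)}\in\mathcal{Z}\}.
\end{equation*}
Since later letters apply on the left in \eqref{e:damp-word}, the inner sum factors as
\begin{equation*}
U_{\mathcal{Y}^{(\ell)}} \;=\; U_{\mathcal{W}(N_0)}^{\,8-\ell}\,U_{\mathcal{Z}}\,U_{\mathcal{W}(N_0)\setminus\mathcal{Z}}^{\,\ell-1}.
\end{equation*}
The bulk factors are bounded uniformly in $h$: using $U_q(1)=U_0(1)V_-(1)$ with $V_-(1)\in\Psi^0_h$ of symbol $v_-(1)\leq 1$ and $\sigma_h(I-A_0)\in[0,1]$, estimate \eqref{e:basic-norm-bound} gives $\|U_q(1)(I-A_0)\|\leq 1+Ch$; iterating $N_0=\mathcal{O}(\log(1/h))$ times yields $\|U_{\mathcal{W}(N_0)}\|\leq(1+Ch)^{N_0}=1+o(1)$, and $\|U_{\mathcal{W}(N_0)\setminus\mathcal{Z}}\|$ is then controlled by the triangle inequality.

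The heart of the argument is the bound $\|U_{\mathcal{Z}}\|\leq Ch^{\beta_{\mathrm{damp}}}$. By \eqref{e:equal-norm}, $\|U_{\mathcal{Z}}\|=\|A_{\mathcal{Z}}^-\|$, and by \eqref{e:lwe-3},
\begin{equation*}
A_{\mathcal{Z}}^- = \Op_h^{L_s}(a_{\mathcal{Z}}^-)+\mathcal{O}(h^{1/2}),\qquad a_{\mathcal{Z}}^- \in S^{\comp}_{L_s,1/2,1/4}(T^*M\setminus 0),
\end{equation*}
so the exotic sharp-G\aa rding bound \eqref{e:exotic-norm-bound} reduces everything to a pointwise estimate on $a_{\mathcal{Z}}^-$. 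For any $\mathbf{w}\in\mathcal{Z}$, applying \eqref{e:damp-a-a} at each of the $\geq\alpha N_0$ indices where $w_j=1$ gives
\begin{equation*}
a_{\mathbf{w}}^- = \prod_{j=0}^{N_0-1}\widetilde{a}_{w_j}\circ\varphi_j \;\leq\; e^{-\eta\alpha N_0}\,a_{\mathbf{w}}.
\end{equation*}
Summing over $\mathcal{Z}\subset\mathcal{W}(N_0)$ and using the elementary identity $\sum_{\mathbf{w}\in\mathcal{W}(N_0)}a_{\mathbf{w}} = \prod_{j=0}^{N_0-1}((a_1+a_2)\circ\varphi_j)\leq 1$ yields $\sup|a_{\mathcal{Z}}^-|\leq e^{-\eta\alpha N_0}\leq h^{\eta\alpha\rho/4}$. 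Combined with the decomposition of Step~1,
\begin{equation*}
\|U_{\mathcal{Y}}\| \;\leq\; \sum_{\ell=1}^{8}\|U_{\mathcal{Y}^{(\ell)}}\| \;\leq\; C\,\|U_{\mathcal{Z}}\| \;=\; \mathcal{O}\bigl(h^{\beta_{\mathrm{damp}}}\bigr),
\end{equation*}
for any $\beta_{\mathrm{damp}}<\min(\eta\alpha\rho/4,\,1/4)$, which gives \eqref{e:norm-damped}.

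The main point that needs care is checking that the single-step damping inequality $\widetilde{a}_1\leq e^{-\eta}a_1$ really aggregates multiplicatively across $N_0$ steps. This works because on $\supp(a_1\circ\varphi_j)$ the integral $\int_j^{j+1}q\circ\varphi_s\,ds$ exceeds $\eta$ by the construction of $a_1$ in Section \ref{s:partition}, so each damped letter in the product contributes a genuine pointwise factor $e^{-\eta}$ wherever $a_{\mathbf{w}}^-$ is nonzero. Once this observation is in hand, the first-damped-block decomposition, the bulk norm bounds, and the partition-of-unity identity are just bookkeeping.
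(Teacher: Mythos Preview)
Your proposal is correct and follows essentially the same approach as the paper: decompose $\mathcal{Y}$ by the position of a distinguished damped sub-block, reduce to bounding $\|U_{\mathcal{Z}}\|$, pass via \eqref{e:equal-norm} and \eqref{e:lwe-3} to the symbol $a_{\mathcal{Z}}^-$, and then use $\widetilde{a}_1\leq e^{-\eta}a_1$ together with the partition identity $\sum_{\mathbf{w}}a_{\mathbf{w}}\leq 1$ to get $\sup|a_{\mathcal{Z}}^-|\leq e^{-\eta\alpha N_0}$. The only differences are cosmetic: the paper indexes by the \emph{last} damped block (giving $U_{\mathcal{Q}}^{8-\ell}U_{\mathcal{Z}}U_{\mathcal{W}(N_0)}^{\ell-1}$) rather than the first, and bounds both $\|U_{\mathcal{W}(N_0)}\|$ and $\|U_{\mathcal{Q}}\|$ uniformly via the exotic calculus rather than by iterating the single-step estimate and invoking the triangle inequality; both routes are valid and yield $\beta_{\mathrm{damp}}=\min(\alpha\rho\eta/4,\,1/4)$.
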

\begin{proof}
First, if we define $\mathcal{Q}:=\mathcal{W}(N_0)\setminus\mathcal{Z}$, since
$$
\mathcal Y=\bigsqcup_{\ell=1}^{8}\mathcal Y_\ell,\quad
\mathcal Y_\ell := \{\mathbf w^{(1)}\dots \mathbf w^{(8)}\mid
\mathbf w^{(\ell)}\in \mathcal Z,\quad
\mathbf w^{(\ell+1)},\dots,\mathbf w^{(8)}\in\mathcal Q\}.
$$
 we can rewrite $U_{\mathcal{Y}}$ as 
\begin{equation*}
U_{\mathcal{Y}}=\sum_{\ell=1}^8U_{\mathcal{Q}}^{8-\ell}U_{\mathcal{Z}}U_{\mathcal{W}(N_0)}^{\ell-1}.
\end{equation*}
From \eqref{e:equal-norm}, \eqref{e:exotic-norm-bound} and \eqref{e:lwe-3}, we see that for any subset $\mathcal{E}$ of $\mathcal{W}(N_0)$,
\begin{equation*}
\|U_{\mathcal{E}}\|_{L^2\to L^2}
=\|A_{\mathcal{E}}^-\|_{L^2\to L^2}
\leq \|\Op_h^{L_s}(a_{\mathcal{E}}^-)\|_{L^2\to L^2}+Ch^{1/2}
\leq \sup|a_{\mathcal{E}}^-|+Ch^{1/4}
\end{equation*}
From the definition \eqref{e:A-subset} of $a_{\mathcal{E}}^-$ and \eqref{e:damp-a-a}, we see that at any point $(x,\xi)$, by \eqref{e:A-symbol},
$$|a_{\mathcal{E}}^-|\leq|a_{\mathcal{W}(N_0)}^-|\leq
\sum_{\mathbf{w}\in\mathcal{W}(N_0)}\prod_{j=0}^{N_0-1}\widetilde{a}_{w_j}\circ\varphi_j=\prod_{j=0}^{N_0-1}(\widetilde{a}_1+\widetilde{a}_2)\circ\varphi_j\leq 1.$$
Therefore we have
\begin{equation*}
\|U_{\mathcal{Q}}\|_{L^2\to L^2}\leq 1+Ch^{1/4},
\quad
\|U_{\mathcal{W}(N_0)}\|_{L^2\to L^2}\leq 1+Ch^{1/4}
\end{equation*}
Finally, in the definition \eqref{e:A-subset} for $a_{\mathcal{Z}}^-$,
\begin{equation*}
a_{\mathcal{Z}}^-=\sum_{\mathbf{w}\in\mathcal{Z}}a_{\mathbf{w}}^-
=\sum_{\mathbf{w}\in\mathcal{Z}}\prod_{j=0}^{N_0-1}\tilde{a}_{w_j}\circ\varphi_j
\end{equation*}
for each $\mathbf{w}\in\mathcal{Z}$, by the definition \eqref{e:Z-damp-set}
of $\mathcal{Z}$, there are at least $\alpha N_0$ letters $w_j$ are 1. Therefore by \eqref{e:damp-a-a} and \eqref{e:A-symbol},
\begin{equation*}
\begin{split}
|a_{\mathcal{Z}}^-|
\leq e^{-\eta\alpha N_0}\sum_{\mathbf{w}\in\mathcal{Z}}\prod_{j=0}^{N_0-1}a_{w_j}\circ\varphi_j\leq &\; e^{-\eta\alpha N_0}\sum_{\mathbf{w}\in\mathcal{W}(N_0)}\prod_{j=0}^{N_0-1}a_{w_j}\circ\varphi_j\\
=&\; e^{-\eta\alpha N_0}\prod_{j=0}^{N_0-1}(a_1+a_2)\circ\varphi_j\leq e^{-\eta\alpha N_0}\leq Ch^{\alpha\rho\eta/4}.
\end{split}
\end{equation*}
Therefore we have $$\|U_\mathcal{Z}\|_{L^2\to L^2}\leq Ch^{\beta_{\mathrm{damp}}}$$ with 
$\beta_{\mathrm{damp}}=\min(\alpha\rho\eta/4,1/4)$ and this finishes the proof.
\end{proof}

\subsection{Fractal uncertainty principle}
\label{s:fup}
For each word $\mathbf{w}\in\mathcal{W}(2N_1)$, we have a uniform bound on the norm for every $U_{\mathbf{w}}$ which is a consequence of the fractal uncertainty principle \cite[Proposition 5.7]{measupp}.

\begin{prop} 
	\label{p:fup}
There exists $C,\beta_{\mathrm{FUP}}>0$ depending only on $M$, $\mathcal{U}_1$ and $\mathcal{U}_2$ such that for all $\mathbf{w}\in\mathcal{W}(2N_1)$,
\begin{equation}
	\label{e:fup-words}
\|U_\mathbf{w}\|_{L^2\to L^2}\leq Ch^{\beta_{\mathrm{FUP}}}.
\end{equation}
\end{prop}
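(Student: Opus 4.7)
The plan is to reduce to the fractal uncertainty principle \cite[Proposition 5.7]{measupp} by factoring $U_{\mathbf w}$ into anisotropic pseudodifferential pieces whose symbols have the same fractal supports as in the undamped setting.

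Split $\mathbf w = \mathbf w' \mathbf w''$ with $\mathbf w' = w_0\cdots w_{N_1-1}$ and $\mathbf w'' = w_{N_1}\cdots w_{2N_1-1}$ in $\mathcal W(N_1)$. From \eqref{e:damp-word} we have $U_{\mathbf w} = U_{\mathbf w''} U_{\mathbf w'}$, and the identities in \eqref{e:op-word} give $U_{\mathbf w'} = U_0(N_1) A_{\mathbf w'}^-$ and $U_{\mathbf w''} = A_{\mathbf w''}^+ U_0(N_1)$. Hence
\[
U_{\mathbf w} \;=\; A_{\mathbf w''}^+ \, U_0(2N_1) \, A_{\mathbf w'}^-,
\]
and since $U_0(2N_1)$ is unitary it suffices to bound the right-hand side. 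By Lemma \ref{l:long-word-egorov} with $n = N_1$, we may write $A_{\mathbf w'}^- = \Op_h^{L_s}(a_{\mathbf w'}^-) + \mathcal O(h^{1-\rho-})_{L^2\to L^2}$ and $A_{\mathbf w''}^+ = \Op_h^{L_u}(a_{\mathbf w''}^+) + \mathcal O(h^{1-\rho-})_{L^2\to L^2}$.

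By \eqref{e:damp-a-a} and \eqref{e:symbol-word}, the damped symbols $a_{\mathbf w'}^-$ and $a_{\mathbf w''}^+$ are pointwise dominated by, and supported on exactly the same subsets of $T^*M\setminus 0$ as, their undamped analogues $a_{\mathbf w'}$ and $a_{\mathbf w''}\circ\varphi_{-N_1}$ appearing in \cite[\S3.1]{measupp}; these are the dynamically refined Cantor-like subsets of $p^{-1}(1-2\delta, 1+2\delta)$ obtained by iterating the partition $\{\mathcal U_1, \mathcal U_2\}$ along the flow. The argument of \cite[Proposition 5.7]{measupp} now applies verbatim: it bounds any product of the form $\Op_h^{L_u}(b^+) \, U_0(2N_1) \, \Op_h^{L_s}(b^-)$ purely in terms of the supports of $b^\pm$, by reduction to the fractal uncertainty principle \cite[Theorem 3]{fullgap} on horocyclic Cantor sets. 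This yields $\|U_{\mathbf w}\|_{L^2\to L^2} \leq Ch^{\beta_{\mathrm{FUP}}}$.

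The only real obstacle is verifying that the FUP machinery is insensitive to the introduction of damping. This is automatic: since $v_\pm(1) \in [0,1]$ (because $q \geq 0$), the damping factors shrink each symbol pointwise without altering its support, and the fractal geometry driving the estimate is therefore unchanged. One does not need to revisit the delicate fractal dimension estimates or the Fourier-analytic core of \cite{fullgap}; everything is inherited from the undamped proof in \cite{measupp} by support comparison.
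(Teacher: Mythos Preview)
Your proof is correct and follows essentially the same route as the paper: split the word into two halves, express $U_{\mathbf w}$ (up to unitaries) as a product of an $L_s$-anisotropic and an $L_u$-anisotropic operator via Lemma~\ref{l:long-word-egorov}, note that the damped symbols have the same porous supports as their undamped counterparts, and invoke \cite[Proposition~5.7]{measupp}. The only cosmetic difference is that the paper conjugates by $U_0(-N_1)$ on both sides to obtain $U_0(-N_1)U_{\mathbf w}U_0(-N_1)=A_{\mathbf w_-}^-A_{\mathbf w_+}^+$ directly, matching the form $\Op_h^{L_s}(a_-)\Op_h^{L_u}(a_+)$ in which \cite[Proposition~5.7]{measupp} is stated, rather than leaving $U_0(2N_1)$ in the middle as you do.
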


\begin{proof}
Write $\mathbf{w}=\mathbf{w}_-\mathbf{w}_+$ with $\mathbf{w}_\pm\in\mathcal{W}(N_1)$, then we have
\begin{equation*}
U_0(-N_1)U_{\mathbf{w}}U_0(-N_1)=A_-A_+,
\quad A_\pm:=A_{\mathbf{w}_\pm}^\pm.
\end{equation*}
By Lemma \ref{l:long-word-egorov}, we see
\begin{equation}
	\label{e:a-a+}
A_-=\Op_h^{L_s}(a_-)+\mathcal{O}(h^{1-\rho-})_{L^2\to L^2},
\quad A_+=\Op_h^{L_u}(a_+)+\mathcal{O}(h^{1-\rho-})_{L^2\to L^2}
\end{equation}
where $a_\pm:=a_{\mathbf{w}_\pm}^\pm$. The same argument as in \cite[Section 5]{measupp} shows that $\supp a_\pm$ are porous sets and by \cite[Proposition 5.7]{measupp}, we obtain for some $\rho\in(0,1)$,
\begin{equation}
	\label{e:fup}
\|\Op_h^{L_s}(a_-)\Op_h^{L_u}(a_+)\|_{L^2\to L^2}\leq Ch^{\beta_{\mathrm{FUP}}}
\end{equation}
where $\rho$ and $\beta_{\mathrm{FUP}}>0$ depending only on $M,\mathcal{U}_1$ and $\mathcal{U}_2$. Combining \eqref{e:a-a+} and \eqref{e:fup}, we obtain \eqref{e:fup-words}.
\end{proof}

\subsection{End of the proof of Theorem \ref{t:propagator-decay}}
\label{s:end-proof}
Combining Proposition \ref{p:u-y}, \ref{p:fup} and Lemma \ref{l:X-count} and writing $U_{\mathcal{W}(2N_1)}=U_{\mathcal{X}}+U_{\mathcal{Y}}$, we get the following bound on the norm of damped words provided that we choose $\alpha$ small, say $\alpha=\beta_{\mathrm{FUP}}^2/64$ with $\beta_{\mathrm{FUP}}$ in \eqref{e:fup-words}, so that $\beta_{\mathrm{FUP}}-4\sqrt{\alpha}>0$.

\begin{prop}
	\label{p:bound-long-words}
There exists $C,\beta>0$ depending only on $M$ and $q$ such that
\begin{equation}
	\label{e:bound-long-words}
\|U_{\mathcal{W}(2N_1)}\|_{L^2\to L^2}\leq Ch^\beta.
\end{equation}
\end{prop}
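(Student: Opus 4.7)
The plan is to decompose $U_{\mathcal{W}(2N_1)} = U_{\mathcal{X}} + U_{\mathcal{Y}}$ via the partition \eqref{e:XY-def} and bound each summand using one of the two estimates already in hand. The piece $U_{\mathcal{Y}}$ is directly controlled by Proposition \ref{p:u-y}. For $U_{\mathcal{X}}$ I would use the crude triangle inequality
\[
\|U_{\mathcal{X}}\|_{L^2\to L^2} \;\leq\; \sum_{\mathbf{w}\in\mathcal{X}} \|U_{\mathbf{w}}\|_{L^2\to L^2},
\]
apply the uniform fractal-uncertainty bound from Proposition \ref{p:fup} to each individual word, and then use Lemma \ref{l:X-count} to count the terms. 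This yields
\[
\|U_{\mathcal{X}}\|_{L^2\to L^2} \;\leq\; C\,\#(\mathcal{X})\, h^{\beta_{\mathrm{FUP}}} \;\leq\; C\, h^{\beta_{\mathrm{FUP}} - 4\sqrt{\alpha}}.
\]

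The key balance is the role of $\alpha$. On one hand, $\beta_{\mathrm{damp}}$ from Proposition \ref{p:u-y} is essentially proportional to $\alpha$ (recall $\beta_{\mathrm{damp}} = \min(\alpha\rho\eta/4,\,1/4)$), so making $\alpha$ small worsens the gain on $\mathcal{Y}$. On the other hand, the entropy-type factor $h^{-4\sqrt{\alpha}}$ coming from $\#(\mathcal{X})$ must be absorbed by the FUP gain $h^{\beta_{\mathrm{FUP}}}$, which requires $\alpha$ to be small. Following the hint, the choice $\alpha = \beta_{\mathrm{FUP}}^2/64$ gives $4\sqrt{\alpha} = \beta_{\mathrm{FUP}}/2$, so the $\mathcal{X}$-bound becomes $Ch^{\beta_{\mathrm{FUP}}/2}$ with $\beta_{\mathrm{FUP}}/2>0$. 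Setting
\[
\beta \;:=\; \tfrac{1}{2}\min\!\bigl(\beta_{\mathrm{FUP}},\, 2\beta_{\mathrm{damp}}\bigr) \;>\; 0
\]
and combining the two bounds by the triangle inequality then gives \eqref{e:bound-long-words}.

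There is no substantive obstacle beyond this bookkeeping: the hard work (dynamical refinement inside the exotic calculus $\Psi^{\comp}_{h,L,\rho}$, the damping factor $e^{-\eta}$ in $\widetilde{a}_1$ that drives the gain on $\mathcal{Z}$, and the fractal uncertainty principle for porous supports) is already packaged into Propositions \ref{p:u-y} and \ref{p:fup} and Lemma \ref{l:X-count}. The only care required is to verify that after fixing $\alpha$ as a function of $\beta_{\mathrm{FUP}}$, all remaining constants ($C$ in Lemma \ref{l:X-count}, the prefactors in both propositions) depend only on $M$ and $q$ (since $\mathcal{U}_1,\mathcal{U}_2$ and $\eta$ are themselves determined by $q$ via \eqref{e:delta} and \eqref{e:a-1-damp}), so that the final $\beta$ and $C$ depend only on $M$ and $q$, as the statement requires.
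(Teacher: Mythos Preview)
Your proposal is correct and follows essentially the same approach as the paper: decompose $U_{\mathcal W(2N_1)}=U_{\mathcal X}+U_{\mathcal Y}$, bound $U_{\mathcal Y}$ via Proposition~\ref{p:u-y}, bound $U_{\mathcal X}$ by the triangle inequality combined with Proposition~\ref{p:fup} and Lemma~\ref{l:X-count}, and choose $\alpha=\beta_{\mathrm{FUP}}^2/64$ so that $\beta_{\mathrm{FUP}}-4\sqrt\alpha=\beta_{\mathrm{FUP}}/2>0$. The paper gives exactly this argument in the sentence preceding the proposition.
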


Now to prove Theorem \ref{t:propagator-decay}, we only need to estimate the difference between the operators $U_{\mathcal{W}(2N_1)}\Pi$ and $U_q(2N_1)\Pi$. We first expand $U_q(2N_1)$ as 
\begin{equation*}
\begin{split}
U_q(2N_1)=U_q(1)^{2N_1}=&[U_q(1)(I-A_0)+U_q(1)A_0]^{2N_1}\\
=&U_{\mathcal{W}(2N_1)}+\sum_{\ell=1}^{2N_1}
U_q(1)^{2N_1-\ell}U_q(1)A_0(U_q(1)(I-A_0))^{\ell-1}.
\end{split}
\end{equation*}
Recalling that $\Pi=\chi(-h^2\Delta)$ and $A_0=\psi_0(-h^2\Delta)$ with $\chi, \psi_0$ as in \eqref{e:chi} and \eqref{e:psi-0}, respectively, we have 
$$\Pi A_0=A_0\Pi=0, \quad (I-A_0)\Pi=\Pi$$
and thus
\begin{equation*}
\begin{split}
U_q(2N_1)\Pi-U_{\mathcal{W}(2N_1)}\Pi
=&\sum_{\ell=1}^{2N_1}U_q(1)^{2N_1-\ell+1}A_0(U_q(1)(I-A_0))^{\ell-1}\Pi\\
=&\sum_{\ell=1}^{2N_1}U_q(1)^{2N_1-\ell+1}A_0[(U_q(1)(I-A_0))^{\ell-1},\Pi]
\end{split}
\end{equation*}
Here the commutator can be written as
\begin{equation*}
[(U_q(1)(I-A_0))^{\ell-1},\Pi]=
\sum_{j=1}^{\ell-1}(U_q(1)(I-A_0))^{\ell-1-j}[U_q(1)(I-A_0),\Pi](U_q(1)(I-A_0))^{j-1}.
\end{equation*}
Recalling the definition \eqref{e:v+-} of $V_-$, and in particular, $V_-(1)\in\Psi^0_h$, we have
$[V_-(1),\Pi]=\mathcal{O}_{L^2\to L^2}(h)$ and thus
\begin{equation*}
[U_q(1)(I-A_0),\Pi]=[U_0(1)V_-(1)(I-A_0),\Pi]=U_0(1)[V_-(1),\Pi](I-A_0)=\mathcal{O}_{L^2\to L^2}(h)
\end{equation*}
where we also use $[U_0(1),\Pi]=[I-A_0,\Pi]=0$. Now by \eqref{e:bound-damp} and 
$2N_1\leq 2\log(1/h)$ 
\begin{equation*}
\|U_q(2N_1)\Pi-U_{\mathcal{W}(2N_1)}\Pi\|_{L^2\to L^2}\leq Ch^{1/2}.
\end{equation*}
This combining with \eqref{e:bound-long-words} shows that
\begin{equation*}
\|U_q(2N_1)\Pi\|_{L^2\to L^2}\leq Ch^\beta.
\end{equation*}
Recalling that $0\leq 2N_1=8\lceil\frac{\rho}{4}\log(1/h)\rceil\leq T_0=2\log(1/h)$,  we can write $U_q(T_0)=U_q(T_0-2N_1)U_q(2N_1)$ and applying \eqref{e:bound-damp} to $U_q(T_0-2N_1)$ to finish the proof of Theorem \ref{t:propagator-decay}.


\section{Proof of the theorems}
\label{s:proof}
Now we go back to the setting of the damped wave equation \eqref{e:dampwave} and the corresponding eigenvalue problem \eqref{e:eigen}.  

\subsection{Spectral gap}
\label{s:gap}

To prove Theorem \ref{t:gap}, we rescale and reduce the original eigenvalue problem \eqref{e:eigen} (with $\Re\tau\to+\infty$ and $\Im\tau=\mathcal{O}(1)$) to the semiclassical one \eqref{e:semi-eigen} with a suitable $q$. 

First, we write $\tau=h^{-1}+\omega$ where $\omega=\mathcal{O}(1)$ in $\mathbb{C}$, then we obtain
\begin{equation*}
h^2P(\tau)u=(-h^2\Delta-2ihza(x)-z^2)u=0, \quad \|u\|_{L^2}=1
\end{equation*}
with $z=h\tau=1+h\omega$ satisfying \eqref{e:domain-z}. Since $\sigma_h(-h^2\Delta)=p^2$, $u$ satisfies the wavefront set condition \eqref{e:wf-u} by standard elliptic estimates.
This allow us to work near $S^\ast M$ microlocally. More precisely, we fix  functions $\psi_1,\psi_2\in C_0^\infty((0,\infty);\mathbb{R})$ satisfying 
$$\supp\psi_1\subset(1/16,16), \quad \psi_1\geq0, \quad \psi_1\equiv1 \text{ on } [1/4,4]$$
and $\psi_2(\lambda)=\psi_P(\lambda)^2/\lambda$ where $\psi_P$ is given by \eqref{e:psi-P}. Then by functional calculus, 
$$\Pi_j:=\psi_j(-h^2\Delta)\in\Psi^{\comp}_h(M),\quad j=1,2$$
satisfy the following wavefront set condition
$$\WF_h(\Pi_1)\subset p^{-1}((1/4,4)),\quad
\WF_h(I-\Pi_1)\cap p^{-1}([1/2,2])=\emptyset,$$
and
$$\WF_h(I-\Pi_2)\cap p^{-1}([1/4,4])=\emptyset.$$
In particular, by \eqref{e:wf-u}, we have
\begin{equation}
	\label{e:u-localize}
\Pi_ju=u+\mathcal{O}(h^\infty), \quad j=1,2.
\end{equation}

By definition \eqref{e:the-P} of $P$, we have $P^2=-h^2\Delta\Pi_2$. Let $P_1=a(x)\Pi_1$, then \eqref{e:u-localize} implies 
$$(P^2-2ihzP_1-z^2)u=(-h^2\Delta\Pi_2-2ihza(x)\Pi_1-z^2)u
=\mathcal{O}(h^\infty).$$
Thus it suffices to write
\begin{equation}
	\label{e:square-root}
P^2-2ihzP_1=(P-ihQ(z))^2+\mathcal{O}(h^\infty)
\end{equation}
for some $Q=Q(z)$ satisfying \eqref{e:the-Q} and \eqref{e:symbol-Q}, then we have
\begin{equation*}
(P-ihQ(z)+z)(P-ihQ(z)-z)u=\mathcal{O}(h^\infty)
\end{equation*}
and thus \eqref{e:semi-eigen} by the ellipticity of $P-ihQ(z)+\omega$.

To get \eqref{e:square-root}, we use a construction similar to \cite[\S4.2]{hgap} and express $Q$ as an asymptotic sum 
\begin{equation}
	\label{e:borelsum}
Q(z)\sim Q_0+hQ_1(z)+h^2Q_2(z)+\cdots
\end{equation}
with each $Q_j\in\Psi^{\comp}_h, j=1,2,\cdots$ holomorphically depending on $z$ and $Q_0\in\Psi^{\comp}_h$ independent of $z$. First we pick $Q_0$ with symbol 
\begin{equation}
	\label{e:q0}
q_0:=\sigma_h(Q_0)=\sigma_h(P_1)/p=a(x)\psi_1(|\xi|_g^2)/p(x,\xi),
\end{equation}
 then we have
\begin{equation*}
P^2-2ihzP_1=(P-ihQ_0)^2+h^2R_0(z)+\mathcal{O}(h^\infty)
\end{equation*}
for some $R_0(z)\in\Psi^{\comp}_h$. Next we can choose $Q_1$ with symbol $q_1:=\sigma_h(Q_1)=i\sigma_h(R_0)/2p$, so that
\begin{equation*}
P^2-2ihzP_1=(P-ih(Q_0+hQ_1(z)))^2+h^3R_1(z)+\mathcal{O}(h^\infty)
\end{equation*}
for some $R_1(z)\in\Psi^{\comp}_h$. We can continue this process to get a sequence of operators $Q_j(z)\in\Psi_h^{\comp}(M)$ such that for any $m\in\mathbb{N}$, we have
\begin{equation*}
P^2-2ihzP_1=(P-ih(Q_0+hQ_1(z)+\cdots+h^mQ_m(z)))^2+h^{m+2}R_m(z)+\mathcal{O}(h^\infty)
\end{equation*}
for some $R_m(z)\in\Psi^{\comp}_h$. Moreover, from the construction, it is not hard to see that we can take all $Q_j(z)$ and $R_j(z)$ depending on $z$ holomorphically and satisfying
\begin{equation*}
\WF_h(Q_j(z)),\WF_h(R_j(z))\subset p^{-1}(1/4,4).
\end{equation*}
Therefore the asymptotic sum \eqref{e:borelsum} satisfy \eqref{e:the-Q}.
Finally $Q_0$ is independent of $z$ and it is easy to check that $q:=\sigma_h(Q)=q_0$ defined in \eqref{e:q0} satisfy \eqref{e:symbol-Q}. 

Now Theorem \ref{t:gap} follows from Corollary \ref{p:gap-semi} by rescaling.

\subsection{Resolvent estimates}
\label{s:resolvent}
We denote the resolvent operator of the eigenvalue problem \eqref{e:eigen} by
\begin{equation}
	\label{e:resolvent}
R(\tau):=P(\tau)^{-1}=(-\Delta-\tau^2-2ia\tau)^{-1}.
\end{equation}
It is related to the resolvent of the operator $\mathcal{B}$ defined in \eqref{e:B-matrix} by the following formula
\begin{equation*}
(\tau-\mathcal{B})^{-1}=\begin{pmatrix}
R(\tau)(-2ia-\tau) & -R(\tau)\\
R(\tau)(2ia\tau-\tau^2)-I & -\tau R(\tau)
\end{pmatrix}
\end{equation*}

We have the following theorem giving a polynomial resolvent bound in a strip near the real axis when $|\tau|$ is large. We refer to \cite{nozw2} for a similar estimate in the situation of chaotic scattering where we borrow the idea of the proof.

\begin{thm}
\label{t:resolvent}
There exists $C$, $C_0$ and $\beta>0$ such that for $|\Re\tau|\geq C_0$, $\Im\tau>-\beta$,
\begin{equation}
	\label{e:resolvent-0-0}
\|R(\tau)\|_{L^2\to L^2}\leq C|\tau|^{-1-2\min(0,\Im\tau)}\log|\tau|.
\end{equation}
\end{thm}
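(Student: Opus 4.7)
The plan is to reduce to a semiclassical resolvent bound via the framework of Section~\ref{s:gap} and then exploit the decay of the damped propagator from Theorem~\ref{t:propagator-decay} through a Duhamel integral representation. Set $h := 1/\Re\tau$ for $\Re\tau \geq C_0$ (the case $\Re\tau \leq -C_0$ follows by the symmetry $\Sp(\mathcal{B}) = -\overline{\Sp(\mathcal{B})}$, which yields $\|R(-\bar\tau)\|_{L^2\to L^2} = \|R(\tau)\|_{L^2\to L^2}$), and $z := h\tau = 1 + ih\Im\tau$, so $\Im z/h = \Im\tau \in (-\beta, \infty)$. The factorization $-h^2\Delta - z^2 - 2ihza = (\mathcal{P}(z,h)-z)(\mathcal{P}(z,h)+z) + \mathcal{O}(h^\infty)$ established in Section~\ref{s:gap}, together with the ellipticity of $\mathcal{P}+z$ (whose symbol $\psi_P(|\xi|_g^2)+z$ has real part $\geq 1/2$ everywhere for $h$ small, giving $\|(\mathcal{P}+z)^{-1}\|_{L^2\to L^2}=\mathcal{O}(1)$), reduces the problem to proving the semiclassical estimate
\begin{equation*}
\|(\mathcal{P}(z,h)-z)^{-1}\|_{L^2\to L^2} \;\leq\; C\,h^{-1}\log(1/h)\cdot h^{2\min(0,\Im z/h)},
\end{equation*}
since then $\|R(\tau)\| = h^2\|(h^2 P(\tau))^{-1}\|$ together with $h \sim |\tau|^{-1}$ and $\Im z/h = \Im\tau$ produces~\eqref{e:resolvent-0-0}.

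To prove the semiclassical bound I would start from a Duhamel identity. Since $\mathcal{P}(z,h)$ commutes with $U_q(t)$, for any $u \in L^2(M)$ and $T > 0$,
\begin{equation*}
u \;=\; e^{iTz/h}\,U_q(T)u \;+\; \frac{i}{h}\int_0^{T}e^{itz/h}\,U_q(t)(\mathcal{P}-z)u\,dt.
\end{equation*}
Set $T := T_0 = 2\log(1/h)$ and $f := (\mathcal{P}-z)u$, and split $u = \Pi u + (I-\Pi)u$ using the cutoff $\Pi$ from~\eqref{e:chi}, to obtain
\begin{equation*}
(I - e^{iT_0 z/h}\,U_q(T_0)\,\Pi)\,u \;=\; e^{iT_0 z/h}\,U_q(T_0)(I-\Pi)u \;+\; \frac{i}{h}\int_0^{T_0}e^{itz/h}\,U_q(t)\,f\,dt.
\end{equation*}
Fix $\beta < \beta_0/2$, where $\beta_0$ is the constant from Theorem~\ref{t:propagator-decay}. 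Then $|e^{iT_0 z/h}| = h^{2\Im z/h} \leq h^{-2\beta}$ and $\|U_q(T_0)\Pi\|\leq Ch^{\beta_0}$, so $\|e^{iT_0 z/h}\,U_q(T_0)\,\Pi\|\leq Ch^{\beta_0 - 2\beta}<1/2$ for $h$ small enough. Neumann inversion then gives $\|u\|_{L^2}\leq 2\|\text{RHS}\|_{L^2}$.

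The two pieces on the RHS are estimated separately. For the first, on the microlocal support of $I-\Pi$ the symbol $p-z$ of $\mathcal{P}-z$ is bounded away from zero (since $\Re z$ is close to $1$), and a standard elliptic parametrix argument yields $\|(I-\Pi)u\|_{L^2}\leq C\|f\|_{L^2} + \mathcal{O}(h^\infty)\|u\|_{L^2}$, hence $\|e^{iT_0 z/h}\,U_q(T_0)(I-\Pi)u\|_{L^2}\leq Ch^{-2\beta}\|f\|_{L^2}$. For the integral term, combining the uniform bound $\|U_q(t)\|_{L^2\to L^2}\leq 1+Ch^{1/2}$ from~\eqref{e:bound-damp} with the direct computation $\int_0^{T_0}|e^{itz/h}|\,dt \leq CT_0\,h^{2\min(0,\Im z/h)}$ gives an overall bound of $Ch^{-1}\log(1/h)\,h^{2\min(0,\Im z/h)}\|f\|_{L^2}$. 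A direct comparison shows that this integral contribution dominates the elliptic one throughout the strip $\Im z/h\in(-\beta,\infty)$, producing the desired semiclassical resolvent bound.

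The main technical obstacle will be to execute the factorization and elliptic parametrix \emph{globally} rather than merely microlocally near $S^\ast M$: the factorization from Section~\ref{s:gap} produces an approximate inverse of $h^2 P(\tau)$ that agrees with the true inverse only modulo $\mathcal{O}(h^\infty)$, and high-frequency contributions must be absorbed using an additional elliptic parametrix for $h^2P(\tau)$ (which is itself elliptic away from $|\xi|_g = 1$). A careful gluing argument across the cutoffs $\Pi_1,\Pi_2$ of Section~\ref{s:gap} is required. The admissible constant $\beta$ in Theorem~\ref{t:resolvent} is fixed by the Neumann contraction condition $Ch^{\beta_0-2\beta}<1/2$, so $\beta$ may be taken to be any positive constant strictly less than $\beta_0/2$.
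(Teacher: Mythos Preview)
Your proposal is essentially correct and very close in spirit to the paper's proof, but it is organized dually: you derive an a priori bound $\|u\|\le C(\cdot)\|(\mathcal P-z)u\|$ via Duhamel plus Neumann inversion, while the paper constructs an approximate right inverse $R_0(z,h)=\frac{i}{h}\int_0^{T_0}e^{itz/h}U_q(t)\Pi\,dt$ directly and verifies $(\mathcal P-z)R_0=\Pi-e^{iT_0z/h}U_q(T_0)\Pi$. These are two sides of the same identity, so the core analytic input (Theorem~\ref{t:propagator-decay} plus the bound~\eqref{e:bound-damp}) is used in exactly the same way.

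The one place where the paper does more than you sketch is the gluing you flag at the end. The factorization $h^2P(\tau)=(\mathcal P-z)(\mathcal P+z)+\mathcal O(h^\infty)$ holds only microlocally near $S^*M$, since $P^2=-h^2\Delta\,\Pi_2$ and $P_1=a\Pi_1$, so one cannot simply compose global inverses of $\mathcal P\pm z$. The paper handles this by building the full approximate inverse $\widetilde R=R_0\widetilde R_0+R_1$: an elliptic parametrix $\widetilde R_0$ for $\mathcal P+z$ on $\WF_h(\widetilde\Pi)$, an elliptic parametrix $R_1$ for $-h^2\Delta-z^2-2ihza$ away from $S^*M$, and the propagation statement $(I-\Pi_j)U_q(t)\Pi=\mathcal O(h^\infty)$ for $0\le t\le T_0$ (equation~\eqref{e:propagate-pi}) to pass from the localized operator $P^2-2ihzP_1-z^2$ to the actual operator $-h^2\Delta-2ihza-z^2$. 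Your a priori-estimate route would need the same ingredients, and in particular a version of~\eqref{e:propagate-pi} (or an equivalent commutator estimate) to justify replacing $h^2P(\tau)$ by $(\mathcal P-z)(\mathcal P+z)$ on $\Pi u$; this is the step you should make explicit. A minor point: your claim that $\psi_P(|\xi|_g^2)+z$ has real part $\ge 1/2$ everywhere requires choosing $\psi_P\ge 0$, which is harmless but not stated in~\eqref{e:psi-P}; the paper sidesteps this by only inverting $\mathcal P+z$ microlocally on $\WF_h(\widetilde\Pi)$, where $\psi_P=p\approx 1$.
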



\begin{proof}
We write $R(z,h)=h^{-2}R(\tau)=(-h^2\Delta-z^2-2ihza(x))^{-1}$ where $z=h\tau=1+h\omega$ as in Section \ref{s:gap}. To estimate the norm of $R(z,h)$, we build an approximation as follows. Let $Q(z)$ be the operator \eqref{e:borelsum}
constructed in Section \ref{s:gap} and let $U_q(t)$ be the damped propagator defined as in \eqref{e:U-q-t}. We take
\begin{equation}
	\label{e:reso-appro-0}
R_0(z,h)=\frac{i}{h}\int_0^{T_0}e^{itz/h}U_q(t)\Pi dt,
\end{equation}
with $T_0=2\log(1/h)$ as in Theorem \ref{t:propagator-decay}. A straightforward computation shows that
\begin{equation*}
(P-ihQ(z)-z)R_0(z,h)=\Pi-e^{iT_0z/h}U_q(T_0)\Pi
\end{equation*}
where by \eqref{e:propagator-decay},
\begin{equation}
	\label{e:error-0}
\|e^{iT_0z/h}U_q(T_0)\Pi\|_{L^2\to L^2}\leq Ce^{-T_0(\Im z)/h}h^{\beta_0}
=Ch^{\beta_0+2(\Im z)/h}.
\end{equation}
Moreover, by \eqref{e:bound-damp},
\begin{equation}
	\label{e:reso-bound-0}
\begin{split}
\|R_0(z,h)\|_{L^2\to L^2}\leq &\;h^{-1}\int_0^{T_0}e^{-t(\Im z)/h}\|U_q(t)\|_{L^2\to L^2}dt\\
\leq &\;CT_0h^{-1}\max\{1,e^{-T(\Im z)/h}\}\leq Ch^{-1+2\min(0,\Im z)/h}\log(1/h).
\end{split}
\end{equation}
To get a global approximate resolvent, we can use a standard gluing argument. More precisely, let $\widetilde{\chi}\in C_0^\infty(\mathbb{R};[0,1])$ satisfy
\begin{equation*}
\supp\widetilde{\chi}\subset\{\chi=1\}, \quad \widetilde{\chi}=1 \text{ near } 1
\end{equation*}
and define $\widetilde{\Pi}=\widetilde{\chi}(-h^2\Delta)\in\Psi_h^{\comp}$, then since $P-ihQ(z)+z$ and $\Pi$ are elliptic on $\WF_h(\widetilde{\Pi})$, we can find $\widetilde{R_0}(z,h)\in\Psi^{\comp}_h$ such that
\begin{equation*}
(P-ihQ(z)+z)\Pi\widetilde{R_0}(z,h)=\widetilde{\Pi}+\mathcal{O}_{L^2\to L^2}(h^\infty).
\end{equation*}
Then we have
\begin{equation*}
(P^2-2ihzP_1-z^2)R_0(z,h)\widetilde{R}_0(z,h)=\widetilde{\Pi}+\mathcal{O}_{L^2\to L^2}(h^{\beta_0+2(\Im z)/h}).
\end{equation*}

We claim that for $0\leq t\leq T_0=2\log(1/h)$ and $j=1,2$,
\begin{equation}
\label{e:propagate-pi}
(I-\Pi_j)U_q(t)\Pi=\mathcal{O}_{L^2\to H^2}(h^\infty)
\end{equation}
and thus we have
\begin{equation*}
(-h^2\Delta-z^2-2ihza(x))R_0(z,h)\widetilde{R}_0(z,h)=\widetilde{\Pi}+\mathcal{O}_{L^2\to L^2}(h^{\beta_0+2(\Im z)/h}).
\end{equation*}

By the ellipticity of $-h^2\Delta-z^2-2ihza(x)$ away from $S^\ast M$, we can also find $R_1(z,h)\in\Psi^{-2}_h$ such that
\begin{equation*}
(-h^2\Delta-z^2-2ihza(x))R_1(z,h)=I-\widetilde{\Pi}+\mathcal{O}_{L^2\to L^2}(h^\infty).
\end{equation*}
Therefore if we let $\widetilde{R}=R_0\widetilde{R}_0+R_1$, then
\begin{equation}
\label{e:appro-resolvent}
(-h^2\Delta-z^2-2ihza(x))\widetilde{R}(z,h)=I+\mathcal{O}_{L^2\to L^2}(h^{\beta_0+2(\Im z)/h})
\end{equation}
and by \eqref{e:reso-bound-0}, we have the following norm bound on the approximate resolvent
\begin{equation}
\label{e:reso-appro-bound}
\|\widetilde{R}(z,h)\|_{L^2\to L^2}\leq Ch^{-1+2\min(0,\Im z)/h}\log(1/h).
\end{equation}
Multiplying $R(z,h)$ to the left of both sides of \eqref{e:appro-resolvent}, we get 
$$R(z,h)-\widetilde{R}(z,h)=\mathcal{O}_{L^2\to L^2}(h^{\beta_0+2(\Im z)/h})$$
and as long as $\Im z>-\beta_0/4$, we see that for any $\varepsilon>0$,
\begin{equation}
\label{e:reso-bound}
\|R(z,h)\|_{L^2\to L^2}\leq Ch^{-1+2\min(0,\Im z)/h}\log(1/h).
\end{equation}
Rescaling \eqref{e:reso-bound} back to $\tau=h^{-1}z$ we get \eqref{e:resolvent-0-0}. Now to finish the proof, it remains to show \eqref{e:propagate-pi}. For $t$ uniformly bounded in $h$, \eqref{e:propagate-pi} follows from Egorov's Theorem since $\varphi_t$ leaves each energy surface $p^{-1}(E)=\{|\xi|=E\}$ invariant and by definition of $\Pi$ and $\Pi_j$,
\begin{equation*}
\WF_h(\Pi)\subset p^{-1}((1-\delta,1+\delta)), 
\quad\WF_h(I-\Pi_j)\subset T^\ast M\setminus p^{-1}((1/2,2)).
\end{equation*}
This can be extended to $0\leq t\leq 2\log(1/h)$ by the same argument as in \cite{anno,nozw,nozw2}. For example, we can first extend to $0\leq t\leq\frac{1}{4}\log(1/h)$ using Egorov's Theorem up to Ehrenfest time, then insert finite number of intermediate microlocal cutoffs in the middle to extend the estimate \eqref{e:propagate-pi} all the way to $0\leq t\leq 2\log(1/h)$.
\end{proof}

\subsection{From resolvent estimate to energy decay}
\label{s:energydecay}
Theorem \ref{t:energy-decay} follows from the resolvent estimate (Theorem \ref{t:resolvent}) by a standard argument. For example, we can apply \cite[Theorem 6.1]{csvw} which is the general statement for obtaining energy decay of damped wave equation from high frequency resolvent estimate. Here we give a sketch of the proof of Theorem \ref{t:energy-decay} following \cite[Chapter 5]{e-z} and  \cite[Section 3]{schenckpressure} where we refer the reader to a detailed argument.

We first remark that \eqref{e:resolvent-0-0} implies that for $|\Re\tau|\geq C_0$ and $\Im\tau>-\beta$,
\begin{equation}
\label{e:resolvent-0-2}
\|R(\tau)\|_{L^2\to H^2}\leq C|\tau|^{1-2\min(0,\Im\tau)}\log|\tau|.
\end{equation}
An interpolation between \eqref{e:resolvent-0-0} and \eqref{e:resolvent-0-2} shows that as long as $s>-2\min(0,\Im\tau)$, we have for $|\Re\tau|\geq C_0$ and $\Im\tau>-\beta$,
\begin{equation}
\label{e:resolvent-0-s}
\|R(\tau)\|_{L^2\to H^{1-s}}\leq C.
\end{equation}
Using standard arguments by induction, we can prove the \eqref{e:resolvent-0-s} for the $H^{s'}\to H^{1-s+s'}$ norm for any integer $s'$ and then by interpolation, any $s'\in\mathbb{R}$. In particular, with $s'=s$, we see that for any $s>0$, we have with $\gamma\in(0,\min(G,s/2))$, where $G$ is the spectral gap defined in \eqref{e:gap},
\begin{equation}
\label{e:resolvent-s-1}
\sup_{\Im\tau=-\gamma}\|R(\tau)\|_{H^s\to H^1}\leq C.
\end{equation}

Let $\chi\in C^\infty(\mathbb{R};[0,1])$ such that $\chi\equiv0$ for $t\leq0$ and $\chi\equiv1$ for $t\geq1$. With $v$ the solution of \eqref{e:dampwave}, we let $w=\chi v$, then
\begin{equation*}
(\partial_t^2-\Delta+2a(x)\partial_t)w=g:=\chi''v+2\chi'\partial_tv+2a\chi'v.
\end{equation*}
Taking inverse Fourier transform $\check{f}(\tau)=\int_{\mathbb{R}}e^{it\tau}f(t)dt$ in time gives
\begin{equation*}
P(\tau)\check{w}(\tau, x)=\check{g}(\tau,x),
\end{equation*}
where both sides are holomorphic when $-\Im\tau\in(0,G)$ since both $w$ and $g$ are supported in $\{t\geq0\}$. In particular, for $\tau\in\mathbb{R}$ and $\gamma\in(0,\min(G,s/2))$,
\begin{equation*}
\check{w}(\tau-i\gamma,x)=R(\tau-i\gamma)\check{g}(\tau,x).
\end{equation*}
Taking $H^1$-norm, we have by \eqref{e:resolvent-s-1} and Parseval formula 
\begin{equation*}
\|e^{\gamma t}w\|_{L^2(\mathbb{R};H^1)}\leq C\|e^{\gamma t}g\|_{L^2(\mathbb{R};H^s)}.
\end{equation*}
We notice that $w=1$ and $\supp g\subset\{0\leq t\leq 1\}$, thus 
\begin{equation*}
\|e^{\gamma t}v\|_{L^2([1,\infty);H^1)}\leq C\|g\|_{L^2([0,1];H^s)}
\leq C\left(\|v\|_{L^2([0,1];H^s)}+\|\partial_tv\|_{L^2([0,1];H^s)}\right)
\end{equation*}
where the right-hand side can be estimated by the $\mathcal{H}^s$-norm of the initial data using standard energy estimate. Hence we obtain the following integrated form of \eqref{e:energy-decay}:
\begin{equation}
\label{e:energy-decay-integral}
\|e^{\gamma t}v\|_{L^2([1,\infty);H^1)}\leq C\|(v_0,v_1)\|_{\mathcal{H}^s}.
\end{equation}
To passing from \eqref{e:energy-decay-integral} to \eqref{e:energy-decay}, we can use the following lemma from \cite[\S 5.3]{e-z}, (see \cite[Lemma 9]{schenckpressure}):
\begin{lem}
There exists $C>0$ such that for any solution $v$ of \eqref{e:dampwave},
\begin{equation}
\label{e:energy-intgral}
E(v(T))\leq C\|v\|_{L^2([T-2,T+1],H^1)}^2,\quad T\geq 2.
\end{equation}
\end{lem}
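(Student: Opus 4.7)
The plan is to use a standard two-step interior energy estimate: first reduce to an integrated energy bound via monotonicity of $E(v(t))$, and then control the $\|\partial_t v\|_{L^2}^2$ contribution by multiplying \eqref{e:dampwave} by a time cutoff times $v$ and integrating by parts.

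First I would note that, since $a\geq 0$, direct differentiation under the integral combined with \eqref{e:dampwave} yields the energy identity
\[
\frac{d}{dt}E(v(t))=-2\int_M a(x)|\partial_t v(t,x)|^2\,dx\leq 0.
\]
Hence $E$ is non-increasing, so for $T\geq 2$,
\[
E(v(T))\leq \int_{T-1}^T E(v(s))\,ds=\frac{1}{2}\int_{T-1}^T\!\int_M\bigl(|\partial_t v|^2+|\nabla v|^2\bigr)\,dx\,ds.
\]
The $|\nabla v|^2$ term is already bounded by $\|v\|_{L^2([T-2,T+1],H^1)}^2$, so everything reduces to controlling $\int_{T-1}^T\!\int_M|\partial_t v|^2\,dx\,ds$ in terms of the same quantity.

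For this, I would fix a cutoff $\chi\in C_0^\infty((T-2,T+1))$ with $0\leq\chi\leq 1$ and $\chi\equiv 1$ on $[T-1,T]$, multiply \eqref{e:dampwave} by $\chi(t)v(t,x)$, and integrate over $\mathbb{R}_t\times M$. Integrating by parts in $t$ in the $\chi v\,\partial_t^2 v$ term twice (using $v\partial_t v=\tfrac12\partial_t(v^2)$ to eliminate the mixed term that the first integration produces), integrating by parts in $x$ in the Laplacian term, and rewriting the damping contribution as $\chi a\,\partial_t(v^2)$ will yield an identity of the form
\[
\int \chi|\partial_t v|^2\,dx\,dt=\int \chi|\nabla v|^2\,dx\,dt+\tfrac{1}{2}\int \chi'' v^2\,dx\,dt-\int \chi' a(x)v^2\,dx\,dt,
\]
with no $\partial_t v$ remaining on the right. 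Each term on the right is bounded by $\|v\|_{L^2([T-2,T+1],H^1)}^2$, and since $\chi\equiv 1$ on $[T-1,T]$ the left side dominates $\int_{T-1}^T\!\int_M|\partial_t v|^2\,dx\,ds$, closing the estimate.

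There is no serious obstacle here: the only point requiring a bit of care is arranging the integration by parts so that no $\partial_t v$ term survives on the right-hand side, which is achieved by the double use of $v\partial_t v=\tfrac12\partial_t(v^2)$ (once in the term generated by $\chi'$ in the wave part, and once in the damping term, where the $x$-independence of $\chi$ is essential). This is exactly the strategy of \cite[\S 5.3]{e-z}.
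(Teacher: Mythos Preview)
Your proof is correct and follows exactly the strategy the paper invokes: the paper does not give its own argument for this lemma but simply cites \cite[\S5.3]{e-z} (and \cite[Lemma~9]{schenckpressure}), which is precisely the monotonicity-plus-multiplier computation you carry out. The identity you derive is right, and the only cosmetic point is that if $v$ is complex-valued you should multiply by $\chi\bar v$ and take real parts, replacing $v\partial_t v=\tfrac12\partial_t(v^2)$ by $\Re(\bar v\,\partial_t v)=\tfrac12\partial_t|v|^2$; nothing else changes.
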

Combining \eqref{e:energy-decay-integral} and \eqref{e:energy-intgral} with $T>3$ finishes the proof of Theorem \ref{t:energy-decay}.

\subsection{Final remarks}
\label{e:remark}

Finally, as pointed out by Kiril Datchev and Jared Wunsch to the author, the result in \cite{measupp} already implies a subexponential decay of the energy. In particular, \cite[Theorem 2]{measupp} shows that for any $a\in C^\infty(M)$ which is not identically zero, there are constants $C,h_0>0$ depending only on $M$ and $a$ such that for any $u\in H^2(M)$ and $0<h<h_0$,
\begin{equation}
\label{e:main-measupp}
\|u\|_{L^2}\leq C\|au\|_{L^2}+\frac{C\log(1/h)}{h}\|(-h^2\Delta-1)u\|_{L^2}.
\end{equation}
We can obtain the following weaker resolvent estimate in a smaller domain (comparing to \eqref{e:reso-bound}) directly from \eqref{e:main-measupp} by a straightforward argument. There exists a constant $C>0$ such that, for $z$ with 
\begin{equation}
\label{e:weak-domain}
|z-1|\leq\frac{h}{C(\log(1/h))^2},
\end{equation}
we have
\begin{equation}
\label{e:weak-resolvent}
\|R(z,h)\|_{L^2\to L^2}\leq\frac{C(\log(1/h))^2}{h}.
\end{equation}
In particular, \cite[Theorem 6.1]{csvw} (with $\alpha(|\lambda|^{-1})=(\log(2+|\lambda|))^2$, $k=2$, $P(r)=(\log(r))^{-2}$ and $F(t)=e^{t^{1/3}/C}$) shows that \eqref{e:weak-resolvent} implies the following statement: For any $s>0$, there exists $C>0$ such that for any $(v_0,v_1)\in \mathcal{H}^s$, the solution to \eqref{e:dampwave} satisfies
\begin{equation}
\label{e:weak-energy-decay}
E(v(t))\leq Ce^{-t^{1/3}/C}\|(v_0,v_1)\|_{\mathcal{H}^s}.
\end{equation}
This is of course weaker than our main result \eqref{e:energy-decay}.

\def\arXiv#1{\href{http://arxiv.org/abs/#1}{arXiv:#1}}

\end{document}